\def \R {{\mathbb R}}
\def \N {{\mathbb N}}
\def \Z {{\mathbb Z}}
\def \C {\mathcal{C}}
\def \d {\mathbf d}
\def \U {\mathbb U}
\def\T{\mathcal{C}}
\def\app{{`}}
\def \f {{\bar f}}
\newtheorem{theorem}{Theorem}[section]
\newtheorem{lemma}{Lemma}[section]
\newtheorem{proposition}{Proposition}[section]
\newtheorem{definition}[theorem]{Definition} 
\newtheorem{remark}[theorem]{Remark}
\author{Anna Chiara Lai}
\address{A. C, Lai, Dipartimento di Scienze di Base e Applicate per l'Ingegneria,
Sapienza Universit\`a di Roma, Via Scarpa 16, 00181, Roma, Italy\\
Telefax (39)(06)442401836,\,\, Telephone (39)(06)  49766555}
\email{anna.lai@sbai.uniroma1.it}
\author{Monica Motta}
\address{M. Motta, Dipartimento di Matematica,
Universit\`a di Padova\\ Via Trieste, 63, Padova  35121, Italy\\
Telefax (39)(49) 827 1499,\,\, Telephone (39)(49) 827 1368}  
\email{motta@math.unipd.it}
\title{Converse Lyapunov theorems  for control systems with unbounded controls } 
\thanks{This research is partially supported by the  INdAM-GNAMPA Project 2020 ``Extended control problems: gap, higher order conditions and   Lyapunov functions"
.}
\keywords{Converse Lyapunov theorem, Asymptotic controllability, Asymptotic stabilizability,  Discontinuous feedback law, Impulsive control systems.}
\subjclass[2020] {93B05, 93D15,93D20, 93C10, 93C27}
\begin{document}
\maketitle






\begin{abstract}
In this paper we extend well-known relationships between global asymptotic controllability, sample stabilizability,  and the existence of a control Lyapunov  function   to  a wide class of control systems with unbounded controls, which includes control-polynomial systems. In particular, we consider open loop controls and  discontinuous  stabilizing feedbacks, which may be  unbounded  approaching the target, so that the corresponding trajectories may present a chattering behaviour. 
A key point of our results is to prove that  global asymptotic controllability, sample stabilizability,  and  existence of a control Lyapunov  function for these systems  or for an {\em impulsive extension} of them are equivalent.  \end{abstract}



\section{Introduction}
In this paper we extend classic  equivalence results between global asymptotic controllability to a set $\C$, sample stabilizability to   $\C$,  and the existence of a control Lyapunov  function   to  a 
 control system of the form 
\begin{equation}\label{Eintro}
\dot x(t)=f(x(t),u(t)) \ \text{a.e.}, 
  \qquad u(t)\in U  \ \text{a.e.},
\end{equation}
where the (unbounded) control set $U\subseteq\R^m$ is  a closed cone, the target set $\C\subset\R^n$ is closed  with  compact boundary,  and the function $f:\R^n\times U\to\R^n$  satisfies suitable growth assumptions  in the control variable, which include control-polynomial dependence (see hypothesis {\bf (Hg)} below).  The extension lies in the fact that, following  \cite{LMR16,LM20,LM19},  we consider notions of global asymptotic controllability and  sample stabilizability  which involve open loop controls $u\in L^\infty_{loc}$ and   locally bounded  feedback laws $K:\R^n\setminus\C\to U$ with possibly  $\limsup_{x\to\bar x\in\partial\C}|K(x)|=+\infty$, respectively. 

\vskip 0.2 truecm
The problems of  asymptotic controllability and  feedback stabilization to a point or to a   set 
of   control systems 
 that are nonlinear  (and for which linearization fails),  
 and their relationships with the existence of  control Lyapunov functions have been central topics in control theory since the 1980s.  It is now well-known that a smooth control Lyapunov function, which guarantees  the asymptotic controllability of the system,  may not exist and a  continuous stabilizing feedback fails in general to exist   either  (see \cite{Brockett,SS80,Art83,S83,Ry94,CR94,SS95}). Under suitable assumptions on  the dynamics function $f$ and on the control set $U$,  these problems were solved in \cite{CLSS} by the introduction of nonsmooth control Lyapunov  functions,  discontinuous feedback laws $K=K(x)$, and a ‘‘sample and hold" solution concept,  similar to that used in differential games in  \cite{KS88}. 
 
In this context, converse Lyapunov theorems have been established, showing how global asymptotic controllability (GAC), which is equivalent to sample stabilizability by a result in  \cite{CLSS},  implies the existence of continuous (see \cite{S83}), and actually locally Lipschitz and even semiconcave  control Lyapunov functions. The  latter property plays a a fundamental role in the  the explicit construction of stabilizing feedback strategies (see \cite{CLRS,R00,R02,MRS, KT00,KT04}).  We have limited ourselves to mentioning only a few key articles and those most related to the present work.
For a broader overview  we refer e.g.  to   \cite{C10,DES11,K15,Tsi12,Tsi16} and references therein. 
 
Two are the  key hypotheses in the above results: (i)  $f$ is continuous in $(x,u)$ and  Lipschitz continuous  in $x$ on compact subsets of $\R^n$ (or `uniform in distance to the set $\C$',  as in \cite{KT00}),  uniformly with respect to $U$; (ii) the vector field   $f(x,u)$ associated to   $u=u(t)$ or  $u=K(x)$ and  steering   trajectories of \eqref{Eintro}  to $\C$ in a uniform way, is  bounded  in  any  neighborhood of the target. Actually, it is usually assumed that these open-loop  and  feedback controls   are  themselves   bounded  for states  close to  $\C$.  
\vskip 0.2 truecm
 Our aim is to extend these results to a wider class of control systems where  conditions (i), (ii) above do not hold. This extension is  not achieved by refining the techniques  used in the case of classic  assumptions on $f$.  
 Rather, following an approach commonly adopted in optimal impulsive control,  as generalized in \cite{RS00,MS14,KDPS14},  we  embed the original control system into an extended control system  with bounded controls, to which the known results apply.  Our main result is that GAC, sample stabilizability,  and  existence of a control Lyapunov  function for the extended system or for the original system are all equivalent properties (see the Converse Lyapunov Theorem \ref{thm3}).
 These relationships between the original control system and its impulsive extension  are relevant in themselves. Indeed, on the one hand, it is not obvious a priori  that  asymptotic controllability or stabilizability   to the target by 
means of impulsive inputs  guarantees  controllability and  
stabilizability of  \eqref{Eintro} to  $\C$  over unbounded controls,  since 
 trajectories of the impulsive extension may not be approximated  by  trajectories of  \eqref{Eintro} with
the same endpoint. On the other hand, the explicit construction of control strategies for the (impulsive) extended system, which is usually simpler, can be used to build a stabilizing feedback also for the original control system  \eqref{Eintro}, as described in Section \ref{s4}. 
\vskip 0.2 truecm
More in detail, the main hypotheses we will consider on $f$, are:  
 \vskip 0.2 truecm
\noindent {\bf(Hg)} {\em  there exists a strictly increasing,  bijective function $\nu:[0,+\infty)\to[0,+\infty)$, to which we refer to as  {\em growth rate},  such that

{\bf (i)} the  function $\f:(\R^n\setminus\C) \times U\to\R^n$, defined by
			$
			\displaystyle \bar{f}(x,u) := \frac{ {f} (x,u)}{1 +\nu(|u|)}, 
			$
 is uniformly continuous on ${\mathcal K}\times U$  for  any compact  set ${\mathcal K}\subset \R^n\setminus\C$   and  bounded on $(B_R(\C)\setminus \C)\times U$ for any $R>0$;  		

{\bf (ii)}  the function  $F:\overline{(\R^n\setminus\C)}\times{[0,+\infty)\times U}\to \R^n$, defined  
  as
$$
F(x,w_0,w): =\lim_{r\to w_0^+}\f\left(x,\frac{w}{|w|}\,\nu^{-1}\left(\frac{|w|}{r}\right)\right)=\lim_{r\to w_0^+}f\left(x,\frac{w}{|w|}\,\nu^{-1}\left(\frac{|w|}{r}\right)\right) r \  \footnote{For any  $w\in\R^m$, when $w=0$ we mean that $\frac{w}{|w|}=0$.} 
$$
is not identically zero, it is continuous, and  locally bounded on  $\overline{(\R^n\setminus\C)}\times  \overline{{\U}}$, where 
$
 {\U}:=\{(w_0,w)\in(0,+\infty)\times U: \ \ w_0+|w|=1\}.
 $ 
 }
\vskip 0.2 truecm 
 The function  $\nu$ represents the  maximal  growth  of  $f$ in the control $u$.  We will refer to $\bar f$ and $F$  as  {\em rescaled dynamics}  and {\em extended dynamics} function, respectively.  This extension consists essentially in  a  control-compactification, obtained by  adding the scalar control $w_0$, so that the pairs $(w_0,w)=(0,w)$ with $|w|=1$ represent the points of $U$   `at infinity'.  Observe that control-polynomial systems of degree $d$  with   continuous coefficients satisfy  hypothesis {\bf(Hg)} with growth rate $\nu(r)=r^d$.   
\vskip 0.2 truecm 
Under  assumption {\bf(Hg)} and some standard Lipschitz continuity hypotheses on the extended dynamics $F$ specified in Section \ref{s4}, we establish our Converse Lyapunov Theorem.  The  proof of  this theorem relies on three key results: Theorem \ref{thmsigma}, dealing with two equivalent notions of GAC  for unbounded dynamics, which is crucial in order to prove that  GAC of  $\dot x=f(x,u)$ and GAC of the rescaled control system  $\dot x=\f(x,u)$ are equivalent  (see Theorems \ref{thm2}, \ref{ThEEr});  the interplay  between the rescaled and the extended control system (see Propositions \ref{p3}, \ref{p3}); and the fact that sample stabilizability implies GAC also for dynamics which are merely continuous on $\R^n\setminus\C$ (see Theorem \ref{PSGAC}). 

\vskip 0.2 truecm
Let us  point out  that there are interesting situations in which considering bounded controls (and dynamics)  the system is not asymptotically controllable and not stabilizable, whereas it becomes so   if dynamics   that can become unbounded when approaching the target are allowed  (see the  example in Section \ref{Sex}). This is the case, for instance, of some applications to Lagrangian mechanics  where part of the coordinates act as controls. The evolution of the remaining coordinates is then described  by an ``impulsive" control system, where the  dynamics function  is linearly or quadratically dependent on the derivatives of the controlled coordinates, derivatives which are identifiable  with unbounded controls  (see \cite{AB1,Mar91},  and  \cite{BR10} with references therein). In particular, in  \cite{BR10} the authors   exhibit mechanical examples  for which stabilization can only be achieved by ‘‘vibrating controls", namely   allowing unbounded inputs.
 \vskip 0.2 truecm
 The paper is organized as follows.  In Section \ref{s2} we introduce the notions of  GAC and  GAC with $U\cap\sigma$ controls,  prove that they are equivalent, and  show that  sample stabilizability implies  GAC.  Section \ref{s3} is devoted to establish some  relationships between the rescaled and the extended system.  In  Section \ref{s4}   we  prove the converse Lyapunov theorem and describe how related explicit feedback constructions for the original and for the extended control system can be implemented.  In Section \ref{Sex}, an example  concludes the paper.
 
	\subsection{Notations}\label{Sprel}  For $a,b\in\R$, we set $a\vee b:=\max\{a,b\}$, $a\land b:=\min\{a,b\}$.  Let $\Omega\subseteq \R^N$ for some integer $N\ge1$ be a nonempty set. For every $r\geq 0$, we set $B_r(\Omega):=\{x\in \R^n: \ d(x,\Omega)\leq r\}$, where $d$ is the usual Euclidean distance. We use $\overline\Omega$,  $\partial\Omega$, and  $\mathring \Omega$ to denote the closure, the boundary, and the interior of $\Omega$, respectively.  For any interval $I\subseteq\R$,    $L^\infty(I,\Omega)$,  $AC(I,\Omega)$ are the sets of  functions  $x:I\to\Omega$, which are essentially bounded or absolutely continuous, respectively,  on $I$. We use $L^\infty_{loc}(I,\Omega)$,  $AC_{loc}(I,\Omega)$ to denote the sets of  functions  $x:I\to\Omega$, which are essentially bounded or absolutely continuous  on any compact subset $J\subset I$. When no confusion may arise, we  simply write $L^\infty(I)$,  $AC(I)$, $L^\infty_{loc}(I)$,  $AC_{loc}(I)$.

\section{Global asymptotic controllability and sample stabilizability}\label{s2}
We  introduce two concepts of  global asymptotic controllability  and prove that they are equivalent.  Furthermore,  we show that sample stabilizability implies global asymptotic controllability,  as in the case of bounded controls.  
\vskip 0.2 truecm
Unless otherwise specified,   we assume $f:(\R^n\setminus\C)\times U\to\R^n$ continuous.  Let us set    $\mathbf d(x):=d(x,\C)$.
\subsection{Equivalent concepts of global asymptotic controllability}
\begin{definition}[Admissible  trajectory-control pair]\label{Admgen}   A couple $(x,u)$  is called an {\em admissible  trajectory-control pair}  for $(f,U)$   if   there exists $T_x\le +\infty$ such that:  the control  $u$   belongs to $L^\infty_{loc}([0,T_x),U)$;    the trajectory $x\in AC_{loc}([0,T_x),\R^n\setminus \C)$  verifies 
	 \begin{equation}\label{Egen}
	 \dot x(t)=f(x(t),u(t)) \qquad\text{a.e. $t\in[0,T_x)$;} 
	 \end{equation}
 and,  if $T_x<+\infty$, one has  $\lim_{t\to T_x^-}\d(x(t))=0$. 	
 If $(x,u)$ is an admissible trajectory-control pair   for $(f,U)$   and $T_x <+\infty$,   we extend $x$ to $[0,+\infty)$ by setting $x(t):= \bar z$ for any $t\geq T_x$,  where $\bar z$ is an arbitrary point of the set
\[
\C_x := \big\{ \zeta \in \partial\T \text{ :}  \ \exists\tau_i\uparrow T_x \,\,\,\text{as $i\to+\infty$ and } \lim_{i \to +\infty} x(\tau_i) =\xi \big\}.\footnote{By the very definition of $T_x$, we have that $\C_x\ne\emptyset$, as $\partial\T$ is assumed to be compact. Hence, for each admissible trajectory-control pair $(x,u)$  for $(f,U                                                                                                                                                                                                                                                                                                                                      )$, the trajectory $x$, possibly extended as above, is always defined on the whole interval $[0,+\infty[$.}
\]
\end{definition}
When no confusion may arise,  we will simply  call {\em admissible trajectory-control pair} any  admissible trajectory-control pair for $(f,U)$.
	
As customary, we   use   ${\mathcal{KL}}$ to denote the set of all continuous functions 
	$\beta:[0,+\infty)\times[0,+\infty)\to[0,+\infty)$ such that:
	(1)\, $\beta(0,t)=0$ and $\beta(\cdot,t)$ is strictly increasing and unbounded 
	for each $t\ge0$;
	(2)\, $\beta(r,\cdot)$ is strictly decreasing  for each $r\ge0$; (3)\, $\beta(r,t)\to0$ 
	as $t\to+\infty$ for each $r\ge0$.
	We  refer to any function $\beta\in{\mathcal{KL}}$   as a {\em descent rate}. 
	
	\begin{definition}[GAC] The system \eqref{Egen}  is called {\em Globally Asymptotically Controllable  (GAC)   to $\C$}  if there exists a function $\beta\in \mathcal{K}\mathcal{L}$ such that for any initial point $z\in\R^n\setminus \C$ there is an admissible trajectory-control pair $(x,u)$  for $(f,U)$ with $x(0)=z$,  such that
\begin{equation}\label{Dd}
\mathbf d(x(t))\leq \beta(\mathbf d(z),t) \quad \forall t\geq 0.
\end{equation}
\end{definition}
	\begin{definition}[GAC with $U\cap \sigma$ controls]\label{GACU}
Let $\sigma:(0,+\infty)\to (0,+\infty)$ be a continuous function. We say that the system \eqref{Egen} is  {\em Globally Asymptotically Controllable  (GAC)   to $\C$ with $ U\cap\sigma$ controls}  if there exists a descent rate $\beta\in \mathcal{K}\mathcal{L}$ such that for any initial point $z\in\R^n\setminus \C$ there is an admissible trajectory-control pair $(x,u)$ for  $(f, U)$ with $x(0)=z$, such that
$$
\mathbf d(x(t))\leq \beta(\mathbf d(z),t) \quad \forall t\geq 0
$$
and
\begin{equation}\label{GACdef2}
|u(t)|\leq \sigma(\mathbf d(x(t)) \quad  \text{for a.e. $t\in[0,T_x)$}.
\end{equation}
\end{definition}
\begin{remark} {\rm 
The concept of GAC with $U\cap \sigma$ controls   introduced above only apparently coincides with the definition considered, e.g.,  in the survey papers \cite{S99,KT00}. In fact, in the previous literature the function $\sigma$ was supposed to be increasing, in order to prevent unbounded inputs around the target.  On the contrary, in Definition \ref{GACU}  it may happen that  $\lim_{r\to0^+}\sigma(r)=+\infty$, thus allowing for  controls  with $L^\infty$ norm diverging to $+\infty$ as the trajectory approaches  $\C$. }
\end{remark}

The concept of GAC with $ U\cap \sigma$ controls, although apparently stronger than GAC, when $f$  is locally Lipschitz continuous in $x$,   is   equivalent to GAC.  
\vskip 0.2 truecm
Precisley, let us   consider the following hypothesis: 
 \vskip 0.2 truecm
\noindent   {\bf (Hl)} {\em the  function $f:(\R^n\setminus\C)\times U\to\R^n$ is continuous and,  for every pair of compact sets ${\mathcal K}\subset\R^n\setminus\C$, $U_1\subset U$, there is some constant $L>0$  such that  
$$
|f(x,u)-f(y,u)|\le L\,|x-y| \qquad \forall x,y\in{\mathcal K}, \ \forall u\in U_1.
$$}
\begin{theorem}\label{thmsigma} Let $f$ satisfy  {\bf (Hl)}. Then,  system \eqref{Egen} is GAC to $\C$ if and only if it is GAC to $\C$ with $ U\cap \sigma$ controls.
\end{theorem}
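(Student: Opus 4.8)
The plan is to prove the two implications separately, the reverse one being immediate and the forward one carrying all the content. For the ``if'' direction, suppose the system is GAC with $U\cap\sigma$ controls for some continuous $\sigma$. Then by Definition \ref{GACU} there is a descent rate $\beta$ such that every $z\in\R^n\setminus\C$ admits an admissible pair $(x,u)$ with $x(0)=z$ and $\mathbf d(x(t))\le\beta(\mathbf d(z),t)$ for all $t\ge0$; the extra bound \eqref{GACdef2} only restricts which controls we use, so \eqref{Dd} holds and the system is GAC, with nothing more to prove. For the ``only if'' direction we must, starting from GAC, manufacture a single continuous $\sigma:(0,+\infty)\to(0,+\infty)$ and, for each $z$, an admissible pair realizing both the descent rate and the pointwise bound $|u(t)|\le\sigma(\mathbf d(x(t)))$. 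Two preliminary observations drive the construction. First, for $x\notin\C$ one has $\mathbf d(x)=d(x,\partial\C)$, so each sublevel set $\{x\in\R^n\setminus\C:\ \mathbf d(x)\le\rho\}$ is contained in the compact set $B_\rho(\partial\C)$; hence the ``shells'' $\{a\le\mathbf d\le b\}$, $0<a<b$, are compact and every GAC trajectory issuing from $\mathbf d(z)\le\rho$ stays in the fixed compact set $B_{\beta(\rho,0)}(\partial\C)\subset\R^n\setminus\C$. Second, GAC already supplies \emph{uniform descent times}: since $\beta(\cdot,t)$ is increasing, for $\mathbf d(z)\le R$ the time $t^*(R)$ at which $\beta(R,t^*)=R/2$ satisfies $\mathbf d(x(t^*))\le R/2$ independently of $z$. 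The only quantity that GAC does not control uniformly is the \emph{size} of the steering controls, and this is exactly what hypothesis \textbf{(Hl)} is used to tame.

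The heart of the argument is a localization-and-compactness step. Fix $0<a<b$. For each $z$ in the compact shell $S=\{a\le\mathbf d\le b\}$, take a GAC pair $(x_z,u_z)$; on the finite interval $[0,t^*(b)]$ the control $u_z$ is essentially bounded, say $|u_z|\le M_z$, and $x_z$ stays in a compact subset $\mathcal K\subset\R^n\setminus\C$ on which, by \textbf{(Hl)}, $f$ is Lipschitz in $x$ uniformly over $\{|u|\le M_z\}$. A Gronwall estimate then shows that if $z'$ is close enough to $z$, the trajectory generated by the \emph{same} control $u_z$ from $z'$ stays uniformly close to $x_z$, hence still reaches $\{\mathbf d\le a\}$ by a slightly larger time while remaining in $\R^n\setminus\C$. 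Covering $S$ by finitely many such neighborhoods yields a single time $T(a,b)$ and a single bound $M(a,b)$ such that from \emph{every} $z\in S$ one can steer into $\{\mathbf d\le a\}$ within time $T(a,b)$, using a control bounded by $M(a,b)$ and a trajectory confined to $\{\mathbf d\le b\}$.

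To globalize, decompose $(0,+\infty)$ by the levels $a_k=2^{-k}$, $k\in\Z$, apply the previous step to each shell $\{a_{k+1}\le\mathbf d\le a_k\}$ to obtain times $T_k:=T(a_{k+1},a_k)$ and bounds $M_k:=M(a_{k+1},a_k)$, and let $\sigma$ be any continuous function on $(0,+\infty)$ with $\sigma(r)\ge M_k$ for $a_{k+1}\le r\le a_k$ (so $\sigma$ may blow up as $r\to0^+$, which is permitted by Definition \ref{GACU}). Given $z$, one concatenates the steering controls shell by shell, starting from the shell containing $\mathbf d(z)$: this produces an admissible trajectory-control pair whose distance to $\C$ decreases through the levels $a_k$ and which, by construction, satisfies $|u(t)|\le\sigma(\mathbf d(x(t)))$ a.e. Finally one recovers a descent rate $\widehat\beta\in\mathcal{KL}$: during the $k$-th stage the distance stays below $a_{k-1}=2\,a_k\le 2\,\mathbf d(z)$ and the stage lasts at most $T_k$, and from these uniform per-scale bounds a $\mathcal{KL}$ majorant $\mathbf d(x(t))\le\widehat\beta(\mathbf d(z),t)$ is built by a standard interpolation, the total steering time being finite or infinite according to whether $\sum_k T_k$ converges.

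I expect the main obstacle to be the localization-and-compactness step: making the continuous-dependence estimate rigorous requires keeping the perturbed trajectories inside a compact subset of $\R^n\setminus\C$ on which the Lipschitz constant from \textbf{(Hl)} is valid (this constant may degenerate as one approaches $\partial\C$, which is precisely why the dyadic, scale-by-scale treatment is needed) while simultaneously guaranteeing that they still cross into the next lower shell. A secondary technical point is the explicit reconstruction of $\widehat\beta\in\mathcal{KL}$ from the discrete data $\{(a_k,T_k)\}$, together with the verification that the infinite concatenation is a genuine admissible pair in the sense of Definition \ref{Admgen}.
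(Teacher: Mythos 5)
Your overall architecture (shell decomposition of $\R^n\setminus\C$, compactness of shells from the compactness of $\partial\C$, a Gronwall/continuous-dependence step using \textbf{(Hl)} plus a finite subcover to get uniform control bounds and steering times per shell, concatenation, and an interpolated $\mathcal{KL}$ majorant) is exactly the paper's strategy. But there is a genuine gap in your globalization step: the claim that ``during the $k$-th stage the distance stays below $a_{k-1}=2a_k\le 2\,\mathbf d(z)$'' is false, and with it your earlier assertion that the stage-$(a,b)$ trajectories are ``confined to $\{\mathbf d\le b\}$''. GAC only gives $\mathbf d(x(t))\le\beta(\mathbf d(z),t)\le\beta(a_k,0)$, and nothing forces $\beta(R,0)\le 2R$: for $\beta(R,t)=100\,R\,e^{-t}$ a stage-$k$ arc may climb through six or seven dyadic shells above its starting one, and for $\beta(R,t)=\sqrt{R}\,e^{-t}$ (small $R$) the number of dyadic shells it can traverse upward is even unbounded as $k\to\infty$. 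This breaks both halves of your conclusion: the bound $|u(t)|\le\sigma(\mathbf d(x(t)))$ fails, because while the trajectory overshoots into the region $\mathbf d\approx r\gg a_k$ the control is still bounded only by $M_k$, whereas your $\sigma$ is required to dominate only the $M_j$ of the shells containing $r$ --- and $M_k$ (controls needed deep near the target) may be far larger than those; and your $\widehat\beta$, with $\widehat\beta(R,0)\approx 2R$, cannot dominate a trajectory that reaches $\sqrt{R}\gg 2R$.

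The paper's device that repairs precisely this point is to tie the shell radii to $\beta$ itself rather than choosing them dyadically: after passing (WLOG, e.g.\ replacing $\beta$ by $2\beta$) to a \emph{strict} inequality $\mathbf d(x(t))<\beta(\mathbf d(z),t)$, it defines $r_0=1$ and $r_{i-1}=\beta(r_i,0)$, so that any stage starting in the strip $\mathcal B_i=\{r_i\le\mathbf d\le r_{i-1}\}$ satisfies $\mathbf d(x(t))<\beta(r_{i-1},0)=r_{i-2}$, i.e.\ each stage spans at most two strips upward \emph{by construction}. Accordingly $\sigma$ on $[r_i,r_{i-1})$ is taken as the maximum of the control bounds of the three strips that can visit that range, and the piecewise-constant majorant $b$ (then dominated by a $\mathcal{KL}$ function $\bar\beta$) is built from the levels $r_{i+N-2}$, consistent with the two-strip overshoot. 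Your argument can be repaired either by adopting these $\beta$-recursive radii, or by keeping dyadic levels but defining $\sigma(r)$ as the maximum of $M_j$ over all stages $j$ that can reach distance $r$ (a finite set for each $r$, since $\beta(a_j,0)\to0$ as $j\to+\infty$) and building $\widehat\beta$ from the correct per-stage bound $\beta(a_k,0)$ rather than $2a_k$; the strict-inequality normalization is also needed so that the Gronwall perturbation has a positive margin, as in the paper's choice of $\varepsilon_{i,z}$. The remaining ingredients of your proposal (the trivial converse, the uniform descent times, the hitting-time truncation guaranteeing $\|u\|_{L^\infty}<\infty$ on each stage, and the concatenation yielding an admissible pair) match the paper's proof.
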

	\begin{proof}
	If system  \eqref{Egen} is GAC to $\C$ with $U\cap\sigma$,   it  is trivially  GAC to $\C$. So,  let us assume  that \eqref{Egen} is GAC to $\C$ and prove that it is GAC with $U\cap\sigma$ controls by  building a continuous  positive function $\sigma$ and a descent rate  $\bar \beta$.
\vskip 0.2 truecm	 
{\em Step 1.}  ($\beta$-admissible trajectory-control pairs) Without loss of generality, in the definition of GAC  we can assume  the inequality in \eqref{Dd}   strict, namely, that  there exists a descent rate  $\beta\in \mathcal{K L}$ such that for all $z\in \R^n\setminus \C$ there is some admissible trajectory-control pair $(x,u)$ with $x(0)=z$,    such that
	\begin{equation}\label{b}
	\d(x(t))<\beta(\d(z),t)\qquad \forall t\geq 0.
	\end{equation} 
	We refer to such $(x,u)$ as a $\beta$-\emph{admissible trajectory-control pair from $z$}.  Let us define the set  $\mathcal A_\beta$, given by the triplets $(x,u,z)$,  where $z\in \R^n\setminus \C$  and, for any $z$, we select one    $(x,u)$ among the admissible trajectory-control pairs from $z$.  Note that, by \eqref{b}, one has
	\begin{equation}\label{bin}
	\beta(R,0)>R\quad \forall R> 0. 
	\end{equation}
	\vskip 0.2 truecm
	{\em Step 2.}  ($\beta$-strips) Let $r_0:=1$ and recursively define $(r_i)_{i\in\Z}$ by  
	$$r_{i-1}=\beta(r_i,0)\qquad i\in\Z,$$
	so that, for instance,  $r_1$ is the solution of $\beta(r_1,0)=1=r_0$ and $r_{-1}=\beta(r_0,0)$. By \eqref{bin} and by the definition of $\mathcal{KL}$ functions, we have that $(r_i)_{i\in\Z}$ is strictly decreasing, positive and 
	$$\lim_{i\to +\infty} r_i= 0 \quad \text{ and }\quad \lim_{i\to -\infty} r_i=+\infty.$$ 
	For every $i\in\Z$,  set $\mathcal B_i:= \{z\in\R^n\setminus\C: \  \d(z)\in [r_i,r_{i-1}]\}$. We define the \emph{$i$-th $\beta$-strip} as the set {$\mathcal A^i_\beta:=\{(x,u,z)\in\mathcal A_\beta: \ \  z\in \mathcal B_i\}$. Note that for all $i\in \Z$,  $r_{i-2}=\beta(r_{i-1},0)$, therefore, for every  $(x,u,z)\in\mathcal A^i_\beta$ one has $\d(x(t))<r_{i-2}$ for all $t\geq 0$.}

	Fix $i\in \Z$ and consider a triplet $(x,u,z)\in \mathcal A^i_\beta$.  Define 
	\begin{equation}\label{tdefiz}T_{i,z}:=\inf \left\{t\geq 0: \  \d(x(t))=\frac{r_i+r_{i+1}}{2}\right\}.  \end{equation}
	Clearly,  $0<T_{i,z}<T_x$ and  the fact that $u\in L_{loc}^\infty([0,T_x))$ implies
	\begin{equation}\label{proof3}
\|u \|_{L^\infty([0,T_{i,z}])}<+\infty.
\end{equation}
Set
	$$\tilde \varepsilon_{i,z}:=\inf\left \{\frac{1}{2}(\beta(\d(z),t)-\d(x(t)): \   t\in [0,T_{i,z}]\right\}.$$
	Note that, by the continuity of $\beta$ and $x$, $\tilde \varepsilon_{i,z}$ is actually a minimum. Furthermore,  $\tilde \varepsilon$ is positive in view of \eqref{b}. Define
	$$\bar  \varepsilon_{i}:=\frac{r_i-r_{i+1}}{4}, \qquad \varepsilon_{i,z}:=\min \{\tilde \varepsilon_{i,z},\bar \varepsilon_i\}.$$

	In view of  the Lipschitzianity  hypothesis {\bf (Hl)}, there exists $\delta_{i,z}>0$ such that, for all $\bar z\in \R^n\setminus \C$ verifying $|z-\bar z|<\delta_{i,z}$,    the Cauchy problem $\dot x=f(x,u)$, $x(0)=\bar z$,   admits  a unique  solution, denoted in the following by  $x(\cdot\, ;u,\bar z)$, which is  defined on the whole interval $[0,T_{i,z}]$ and verifies
	\begin{equation}\label{proof1}
	|x(t)-x(t; u,\bar z)|<\varepsilon_{i,z} \quad \forall t\in[0,T_{i,z}].
	\end{equation}
	From the definition of $\varepsilon_{i,z}$,  it follows that 
	$$
	\begin{array}{l} 
	\displaystyle\d(x(t; u,\bar z))<\d(x(t))+\frac{1}{2}(\beta(\d(z),t)-\d(x(t))<\beta(\d(z),t) \\
	\ \quad\qquad\qquad\leq \beta(r_{i-1},0)=r_{i-2}\quad \forall t\in [0,T_{i,z}],
	\end{array}
	$$
	while  the definition of $T_{i,z}$ yields
	$$\d(x(t; u,\bar z))\geq \d(x(T_{i,z}))-\tilde \varepsilon_i>r_{i+1} \quad \forall t\in [0,T_{i,z}].$$
	In conclusion, for any   $(x,u,z)\in \mathcal A^i_\beta$ there exists some $\delta_{i,z}>0$ such that,  for all  $\bar z\in\R^n\setminus\C$ verifying $|z-\bar z|<\delta_{i,z}$, one obtains
	\begin{equation}\label{proof2}
	\begin{cases}\d(x(t; u,\bar z))\in (r_{i+1},r_{i-2})&\forall t\in[0,T_{i,z}],\\
	\bar x(T_{i,z})\in {\mathcal B}_{i+1}.\end{cases}
	\end{equation}
	
	\vskip 0.2 truecm
	{\em Step 3.}  (Construction of $\sigma$ and  $\bar \beta$) Fix $i\in \Z$ and consider the cover of $\mathcal B_i$ given by the collection of open balls $\mathring B_{\delta_{i,z}}(\{z\})$, with $z\in \mathcal B_i$. Since $\partial \C$ is compact, then the $i$-th strip $\mathcal B_i$ is compact, as well, and consequently it admits a finite subcover $\{ \mathring B_{\delta_{i,z}}(\{z\})\}_{z\in \mathcal Z_i}$, where $\mathcal Z_i$ is a finite subset of $\mathcal B_i$. Now, define 
	\begin{equation}
	\bar \sigma(i):=\max \{\|u \|_{L^\infty([0,T_{i,z}])}: \ \  (x,u,z)\in \mathcal A^i_\beta, \ \ z\in\mathcal Z_i\},
	\end{equation}
where $T_{i,z}$ is as in \eqref{tdefiz}, and $\sigma:(0,+\infty)\to (0,+\infty)$, given by
$$\sigma(r):=\max\{\bar \sigma(i-i),\bar \sigma (i),\bar \sigma(i+1)\} \qquad\forall r\in[r_i,r_{i-1}).
$$
Note that,  for every $i\in\Z$, one has
\begin{equation}\label{sigma}
\sigma(r)\geq \bar \sigma(i) \qquad \forall  r\in [r_{i+1},r_{i-2}).
\end{equation}
To build a new descent rate function, (to be associated to controls $u$ such that $|u(t)|\le\sigma(\d(x(t)))$ for every $t\ge0$), set
\begin{equation}\label{Ti}
T_i:=\max\{T_{i,z}: \  z\in \mathcal Z_i\}.
\end{equation}
Replacing any time  $T_i$ with a larger value if necessary, we can assume that, for every  $i\in\Z$, one has  $\sum_{j= 0}^{+\infty} T_{i+j}=+\infty$.
Then, for every $i \in \Z$ and  $N\in\N$, define 
\begin{equation}\label{Tisum}
\bar T_{i,-1}:=0, \qquad \bar T_{i,N}:=\sum_{j=0}^N T_{i+j}.
\end{equation}
Note that $\bar T_{i,N}\to +\infty$ as $N\to \infty$ for every fixed $i\in\Z$. 
Consider the piecewise constant function $b:[0,+\infty)\times[0,+\infty)\to[0,+\infty)$, given by
$$
\left\{\begin{split}
&b(R,t):=r_{i+N-2} \quad \text{if } R\in [r_i,r_{i-1}) \text{ and } t\in [\bar T_{i,N-1}, \bar T_{i,N}),\\
& b(0,t)=0 \qquad\qquad \forall t\geq 0,
\end{split}\right.
$$
for all $i\in\Z$ and $N\in\N$.
To make notation more compact, we introduce the decreasing, integer valued function $i(R):= i\in \Z$, such that $R\in[r_i,r_{i-1}) $. Note that $i(R)\to +\infty$ as $R\to 0$ and $i(R)\to-\infty$ as $R\to+\infty$. We then rewrite the definition of $b$ as follows
$$b(R,t):=r_{i(R)+N-2} \quad \text{ if } t\in [\bar T_{i(R),N-1},\bar T_{i(R),N}), $$
for $N\in\N$. 
Since $(r_i)$ vanishes as $i\to +\infty$, for all $R\in (0,+\infty)$, $b(R,t)\to 0$ as $t\to +\infty$. Moreover, since $r_{i(R)}\to +\infty$  as $R\to +\infty$, for every $t\geq 0$, we have that $b(R,t)\to+\infty$ as $R\to +\infty$. Then $b$ can be dominated by some $\mathcal {KL}$ function, (say, a larger, continuous linear interpolation) that we call $\bar \beta$.

\vskip 0.2 truecm
	{\em Step 4.} (GAC with $U\cap \sigma$ controls) To conclude, for every initial datum $\bar z\in \R^n\setminus\C$ we need to provide a $\bar \beta$-admissible trajectory control pair $(\bar x,\bar u)$ from $\bar z$,  satisfying $|\bar u(t)|\leq \sigma(\d(\bar x(t))$ for all $t\geq 0$. 

To this aim, fix $\bar z\in \R^n\setminus \C$ and, for brevity, set $i:=i(\mathbf d(\bar z))$, so that $\bar z\in \mathcal  B_i$. By Step 3 it follows that  $\bar z\in\mathring B_{\delta_{i,z_0}}(\{z_0\})$ for some $z_0\in\mathcal Z_i$ and, if $(x_0,u_0,z_0)\in\mathcal A^i_\beta$ (taking into account  also the inequality \eqref{sigma}), there exists $\hat t_0:=T_{i,z_0}\leq T_{i}$   such that the   trajectory $\bar x:=x(t; u_0,\bar z)$, satisfies
\begin{align}
&\d(\bar x(t))\in (r_{i+1}, r_{i-2}) \quad \forall t\in [0,\hat t_0]\label{a},\\
&\bar x(\hat t_0))\in \mathcal B_{i+1}\label{b1},\\
&|u_0(t)|\leq \bar \sigma (i)\leq \sigma(\d(\bar x(t)))\quad \forall t\in [0,\hat t_0]\label{c}.
\end{align}
 By repeating the same argument starting from  the point  $\bar z_1:=\bar x(\hat  t_0)\in \mathcal B_{i+1}$,  one obtains a time $\hat t_1\leq T_{i+1}$ and a control $u_1\in L^\infty([0,\hat t_1],U)$ which satisfy an updated version of \eqref{a}-\eqref{c}, with $i$ replaced by $i+1$. Iterating this procedure, one gets a sequence of times  $(\hat t_N)_N$ and a sequence of controls $(u_N)$, such that $\hat t_N\leq T_{i+N}$  and   $u_N\in L^\infty([0,\hat t_N],U)$ for every $N$. Therefore,  setting for every $N\in\N$,
 $$
 \begin{array}{l} 
  \hat T_{-1}:=0, \quad \hat T_{N}:=\sum_{j=0}^N\, \hat t_{j},  \quad \hat T_{\infty}:=\sum_{j=0}^{+\infty}\, \hat t_{j}, \\
     \hat u(t):=u_N(t-{\hat T_{N-1}})\quad \forall {t\in(\hat T_{N-1},\hat T_{N}]},
     \end{array}
  $$
  one obtains a control $\hat u\in L_{loc}^\infty([0,\hat T_\infty), U)$ such that the corresponding trajectory $\hat x:=x(t; \hat u,\bar z)$ is defined on the whole interval $[0,\hat T_\infty)$ and  enjoys the following properties: 
\begin{align}
&\d(\hat x(t))\in (r_{i+N+1}, r_{i+N-2}) \quad \forall t\in [\hat T_{N-1}, \hat T_{N}]\label{aN},\\
&\hat x(\hat T_N)\in \mathcal B_{i+N+1}\label{bN},\\
&|\hat u(t)|\leq \sigma(\d(\hat x(t)))\quad \forall t\in [0,\hat T_N]\label{cN}.
\end{align}
In particular,  $\d(\hat x(t))\to 0$ as $t\to\hat T_{\infty}^-$. 
 
At this point, a simple inductive argument shows that 
$$\d(\hat x(t))<r_{i+N-1} \quad \forall t \ge \hat T_{N}.$$
Furthermore, for every fixed $N$, $\hat T_N$, which depends on  $\bar z$,  is  bounded above by a constant which depends only on $\d(\bar z)$. Indeed,  by construction,  $\hat T_N \leq \bar T_{i,N}$, for all $N\geq0$. 
Hence, one finally obtains that
$$\d(\hat x(t))<r_{i(\d(z))+N-2}= b(\d(\bar z),t)\leq \bar \beta(\d(\bar z),t), \    t\in[\bar T_{i(\d(z)),N-1},\bar T_{i(\d(z)),N}).$$
Since $\bar T_{i(\d(\bar z)),N}\to +\infty$ as $N\to \infty$, this concludes the proof. 
 \end{proof}
 
 \subsection{Sample stabilizability}\label{s5}
The feedback counterpart  of a notion of GAC which involves admissible trajectory-control pairs $(x,u)$ with controls  $u$ in $ L^\infty_{loc}([0,T_x), U)$, requires necessarily to consider  locally bounded feedback functions $K:\R^n\setminus\C\to  U$, which may have
$
\displaystyle\limsup_{x\to\bar x\in\partial\C}|   K(x)|=+\infty.
$

In correspondence of such feedbacks, following  \cite{LM20} we adopt the notion of {\it sample stabilizability} below. 
\vskip 0.2 truecm
A {\em partition}  (of 
	$[0,+\infty)$) is a sequence $\pi=(t_k) $ such that
	$t_0=0, \quad t_{k-1}<t_k$ \, $\forall k\ge 1$,  and
	$\lim_{k\to+\infty}t_k=+\infty$.  The value  diam$(\pi):=\sup_{ k\ge 1}(t_{k 
	}-t_{k-1})$ will be called the {\em diameter} or the {\em sampling time} of the 
	partition  $\pi$.
	\begin{definition}[Sampling  trajectory-control pair]
		\label{Ssolgen}   Given a locally bounded
		feedback  ${K}:\R^n \setminus \C\to  U$, a partition $\pi=(t_k)$,  and a  point
		$z\in\R^n \setminus \C$, we call  {\em $\pi$-sampling  trajectory-control pair}  for $\dot x=f(x,u)$ {\em from $z$,} a pair $(x,u)$, where the  sampling  trajectory $x$ is a continuous function  defined by recursively solving   
		$$
		\dot x=  {f}(x(t), {K}(x(t_{k-1}))) \qquad  \text{a.e. } t\in[t_{k-1},t_k],~ (x(t)\in\R^n\setminus \C)
		$$
		from the initial time $t_{k-1}$ up to time 
		$$
		\tau_k:=t_{k-1}\vee\sup\{\tau\in[t_{k-1},t_k]: \ x \mathrm{ \  is \, defined \,  on} \,    [t_{k-1},\tau)\},
		$$
		such that $x(t_0)=x(0)=z$. In this case, the   trajectory $x$ is defined on the right-open interval from time zero up to time
		$T^-:=\inf\{\tau_k: \ \tau_k<t_k\}$.  Accordingly,  for every $k\ge 1$ and  
		for all $t\in[t_{k-1},t_k)\cap[0,T^-)$, the {\em  sampling control} is defined as
		\begin{equation}\label{olcres}
		u(t):=  {K}(x(t_{k-1})) \quad \forall t\in[t_{k-1},t_k)\cap[0,T^-), \quad
		k\ge1.  
		\end{equation}
If   $T^-=T_x<+\infty$  such that  $\lim_{t\to T^-_x}\d(x(t))\to 0$,   we extend $x$ to $[0,+\infty)$ as described in Definition    \ref{Admgen}.	\end{definition}

	\begin{definition}[Sample stabilizability] \label{sstabgen}  A locally bounded   feedback 
		${K}:\R^n\setminus\C\to U$ is said to {\em sample stabilize  the control system  $\dot x= {f}(x,u)$  to $\C$}  
		if there is a  descent rate
		$\beta\in{\mathcal {KL}}$  satisfying the following: for each pair $0<r<R$  
		there exists  $\delta=\delta(R,r)>0$,  such that, 
		for every partition $\pi$ with  $\text{diam}(\pi)\le\delta$ and for any 
		$z\in\R^n\setminus\C$ such that $\d(z)\le R$, any  $\pi$-sampling  trajectory-control pair $(x,u)$   with $x(0)=z$ 
		is admissible and verifies: 
		\begin{equation}\label{betaS}
		\d(x(t))\le\max\{\beta(\d(z),t), r\} \qquad \forall t\in[0,+\infty).
		\end{equation}
		We call $\dot x= {f}(x,u)$  {\em sample stabilizable to $\C$} if there is a  feedback  $ {K}$ as above.
		
	\end{definition}

\begin{remark} {\rm 
Given a discontinuous feedback $K$, in this paper we only consider sampling trajectories, which   are classical solutions corresponding to piecewise constant controls. We just point out that, because of the mere continuity of   $f$ and the unboundedness of  $K$, 
 sampling trajectories  can have  a finite blow-up time  and chattering phenomena may  occur. As a consequence, classical  Euler solutions  --defined in  \cite{CLRS} as uniform limits of sampling solutions-- may not exist.  For this reason, in  \cite{LM20} (see also \cite{LM19}) we proposed a notion of \emph{weak} Euler solution, given by the pointwise limit of a sequence of suitably truncated sampling trajectories.  In particular, in   \cite{LM20} it has been shown that sample stabilizability  implies  weak Euler stabilizability.  }
\end{remark}

The main result of this subsection is:
\begin{theorem}\label{PSGAC}
Let   $f:(\R^n\setminus\C)\times U\to\R^n$ be continuous. Then, if the  control system  $\dot x=f(x,u)$ is sample stabilizable to $\C$, it is GAC to $\C$.
\end{theorem}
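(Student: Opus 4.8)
The plan is to derive GAC from sample stabilizability by a \emph{chaining} argument. Using the fixed stabilizing feedback $K$, I run the sampling dynamics in consecutive phases, each driving the distance $\d$ from one prescribed level down to the next, with the target floors tending to $0$; concatenating the resulting piecewise-constant (open-loop) controls produces, for every initial point, an admissible trajectory-control pair whose distance is governed by a single $\mathcal{KL}$ estimate. The key point to exploit is that sample stabilizability already delivers the uniformity over all initial points with $\d(z)\le R$ — through the single feedback $K$ and the single descent rate $\beta$ — that in Theorem \ref{thmsigma} had to be produced by hand from hypothesis {\bf (Hl)} via continuous dependence on initial data. Consequently no regularity of $f$ beyond continuity is needed here.

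\emph{Setup.} Let $K$, $\beta\in\mathcal{KL}$ be as in Definition \ref{sstabgen}. Evaluating \eqref{betaS} at $t=0$ for a pair $(R,r)$ with $r<R=\d(z)$ (and using that at least one sampling pair from $z$ exists, by Peano's theorem on the first sampling subinterval) gives $\d(z)\le\beta(\d(z),0)$; hence, as in Step 1 of Theorem \ref{thmsigma}, I may assume after enlarging $\beta$ that $\beta(R,0)>R$ for all $R>0$. I then introduce the universal levels $(\rho_i)_{i\in\Z}$ built exactly as the $(r_i)$ in Step 2 of Theorem \ref{thmsigma}, namely $\rho_0=1$ and $\rho_{i-1}=\beta(\rho_i,0)$; they are strictly decreasing in $i$ with $\rho_i\to0$ as $i\to+\infty$ and $\rho_i\to+\infty$ as $i\to-\infty$, so every $z\in\R^n\setminus\C$ lies in a unique strip $\d(z)\in[\rho_i,\rho_{i-1})$, of index $i(\d(z))$.

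\emph{Per-phase descent.} For each $k\in\Z$ apply sample stabilizability to $(R,r)=(\rho_{k-1},\rho_k)$, obtaining $\delta_k:=\delta(\rho_{k-1},\rho_k)>0$, and fix $T_k<+\infty$ with $\beta(\rho_{k-1},T_k)\le\rho_k$ (possible since $\beta(\rho_{k-1},\cdot)\to0$); both depend only on $k$. Given $z$ with $i:=i(\d(z))$, define the phases recursively, starting from $z_0:=z$: in phase $N+1$ ($N\ge0$) I begin at a point $z_N$ with $\d(z_N)\le\rho_{i+N-1}$, choose any partition of diameter $\le\delta_{i+N}$, and select one $\pi$-sampling trajectory-control pair from $z_N$ (one exists because on each subinterval $x\mapsto f(x,K(x(t_{k-1})))$ is continuous, so Peano applies). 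By sample stabilizability it is admissible and, along it,
$$
\d(x(t))\le\max\{\beta(\rho_{i+N-1},t),\rho_{i+N}\}\le\beta(\rho_{i+N-1},0)=\rho_{i+N-2},
$$
while at $t=T_{i+N}$ we get $\d(x(T_{i+N}))\le\rho_{i+N}$; I set $z_{N+1}:=x(T_{i+N})$ and iterate.

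\emph{Concatenation and $\mathcal{KL}$ repackaging.} Let $\hat u$ be the concatenation of the phase controls and $\hat x$ the corresponding trajectory, which is continuous across phase boundaries, with $\hat u\in L^\infty_{loc}$ and $\hat x\in AC_{loc}$ solving \eqref{Egen}. If some phase trajectory has maximal time $<T_{i+N}$, admissibility forces $\d\to0$ there and $\hat x$ reaches $\C$ in finite time (terminal case); otherwise, enlarging the $T_k$ if necessary (as in Theorem \ref{thmsigma}) so that $\bar T_{i,N}:=\sum_{j=0}^N T_{i+j}\to+\infty$, the estimate above gives $\d(\hat x(t))\le\rho_{i+N-2}$ on $[\bar T_{i,N-1},\bar T_{i,N})$, whence $\d(\hat x(t))\to0$. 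In both cases $(\hat x,\hat u)$ is admissible. Defining the $z$-independent piecewise-constant $b(R,t):=\rho_{i(R)+N-2}$ on $[\bar T_{i(R),N-1},\bar T_{i(R),N})$ and dominating it by a $\mathcal{KL}$ function $\tilde\beta$ exactly as in Steps 3--4 of Theorem \ref{thmsigma}, I obtain $\d(\hat x(t))\le b(\d(z),t)\le\tilde\beta(\d(z),t)$ for all $t\ge0$, i.e. GAC with descent rate $\tilde\beta$. The main obstacle is not the estimates, which are inherited from Theorem \ref{thmsigma}, but certifying that the concatenated object is a genuine \emph{admissible} pair with only a fixed $K$ and $\beta$ at hand: one must absorb the terminal case in which a single phase already reaches $\C$ in finite time, secure the existence of at least one sampling trajectory per phase despite the mere continuity (hence possible non-uniqueness) of $f$, and arrange that $\delta_k,T_k$ depend only on the level index so that $\tilde\beta$ is one $\mathcal{KL}$ function valid for every initial point.
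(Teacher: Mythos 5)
Your proposal is correct and takes essentially the same route as the paper's proof: the same levels $r_i$ generated by $\beta(\cdot,0)$, per-level sampling parameters $\delta(r_{i-1},r_i)$ and descent times with divergent sums, concatenation of sampling trajectory-control pairs across strips, and domination of the resulting piecewise-constant bound by a $\mathcal{KL}$ function, with the uniformity over initial data supplied by the single feedback $K$ and descent rate $\beta$ rather than by hypothesis {\bf (Hl)}. The only (harmless) deviations are that you run each phase for a fixed duration $T_{i+N}$ instead of stopping at the first entrance into the next strip $\mathcal B_{i+N+1}$ --- which is precisely why you must, and correctly do, absorb the terminal case where a phase reaches $\C$ early --- and that you explicitly verify $\beta(R,0)>R$ (after enlarging $\beta$), a point the paper tacitly inherits from Step 1 of Theorem \ref{thmsigma}.
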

\begin{proof} 
Assume that   $\dot x={f}(x,u)$   is sample stabilizable to $\C$.  Let $K:\R^n\setminus\C\to U$ be a locally bounded,  sample stabilizing feedback, let $\beta\in{\mathcal {KL}}$ be an associated descent rate, and,  for any $r,R\in(0,+\infty)$ with $r<R$, let  $\delta(R,r)>0$ be as in Definition \ref{sstabgen}. 

Consider  the  sequence $(r_i)_{i\in\Z}$  introduced in  Step 2 of the proof of Theorem \ref{thmsigma}, defined in a recursive way by setting 
$$r_0:=1, \qquad    
r_{i-1}=\beta(r_i,0) \quad \forall i\in\Z.
$$
As already observed, this sequence  is  positive, strictly decreasing, and satisfies $\displaystyle\lim_{i\to-\infty}r_i=+\infty$,  $\displaystyle\lim_{i\to+\infty}r_i=0$.
For every $i\in\Z$, let $\mathcal B_i:= \{z\in\R^n\setminus\C: \  \d(z)\in (r_i,r_{i-1}]\}$ 
and choose  a positive sequence $(\hat t_i)_{i\in\mathbb Z}$ such that $\beta(r_{i-1},\hat t_{i})\le r_{i}$ for all $i\in \mathbb Z$, and also satisfying
$$
\sum_{n=0}^{+\infty}\hat t_{i+n}=+\infty.
$$  
 For each $i\in\Z$, set  $\hat T_{i,-1}:=0$ and  $\hat T_{i,N}:=\sum_{n=0}^N\hat t_{i+n}$ for any $N\in\N$.
 Hence, define the piecewise constant function $b:[0,+\infty)\times [0,+\infty)\to[0,+\infty)$, given by
 $$b(R,t):=\begin{cases}
 r_{i-2+N} &\text{if }R\in[r_{i},r_{i-1}), t\in [\hat T_{i,N-1},\hat T_{i,N})\\
 0 &\text{if }R=0, t\geq 0,
 \end{cases} 
$$
 for all $i\in\Z$ and $N\in\N$. 
Note that if $t\in[0,\hat t_i)\subseteq [\hat T_{i,-1},\hat T_{i,0})$ then
\begin{equation}\label{bes}
b(r_{i},t)=r_{i-2}=\beta(r_{i-1},0)\geq \beta(r_{i-1},t),\end{equation}
for all $i\in\Z$. 
As observed in the proof of Theorem \ref{thmsigma},  we can approximate this function $b$ with a $\mathcal{K}\mathcal {L}$ function $\bar \beta$ such that   $\bar \beta(R,t)\geq b(R,t)$ for all $(R,t)\in[0,+\infty)\times[0,+\infty)$.

Now, fixed $i\in\Z$,  define $\delta_i:=\delta(r_{i},r_{i-1})$ and consider the partition $\pi_i:=(t_{i,k})_{k\geq 0}$ where $t_{i,k}:=k \delta_{i}$. For any $z\in  \mathcal B_i$,
select  a $\pi_i$ sampling-trajectory $x_i(t;z)$  from $z$ associated to  the sample stabilizing feedback $K$.
Then, by the above definitions, 
$$\mathbf d(x_i(t;z))\leq \min\{\beta(\d(z),t),r_{i}\} \quad \text{for } t\geq 0.$$
In particular, also in view of \eqref{bes},  we have
\begin{equation}\label{low}\mathbf d(x_i(t;z))\leq  \beta(r_{i-1},t)\leq b(r_{i},t)\leq \bar \beta(r_i,t)\leq  \bar \beta(\d(z),t)\  \text{for } t\in[ 0, \hat t_{i}].\end{equation}
and, since $ r_{i}\geq \beta(r_{i-1},t)$ for all $t\geq \hat t_i$, 
\begin{equation}\mathbf d(x_i(t;z))\leq r_{i} \quad \text{for } t\geq \hat t_{i}.\label{end}\end{equation}
 Let $u_i(t)$ be the sampling control associated to $x_i(t;z)$.
 \medskip

Consider the map $i:\R^n\setminus\C\to \Z$,   defined as $i(z):=i$ whenever $z\in \mathcal B_{i}$.  Fix $z\in \R^n\setminus\C$. Let us   build in a recursive way   an increasing sequence of times $\{T_N\}_{N\geq 0}$ such that $T_N\leq \hat T_{i{(z)},N}$  for all $N\geq 0$, and a trajectory-control pair $(x,u)$ defined in $[0,T_N]$ such that
$$\mathbf d(x(t))\leq \bar \beta (\mathbf d(z), t) \qquad \forall t\in[0, T_{N}],\qquad \mathbf d(x(T_N))=r_{i(z)+N}. $$ 
Precisely, for $N=0$, we define
$T_0:=\inf\{t>0: \ x_{i(z)}(t;z)\in \mathcal B_{i(z)+1}\}$. 
 Note that, in view of \eqref{end}, $T_0\leq \hat t_{i(z)}=\hat T_{i(z),0}$. Set
$$x(t):=x_{i(z)}(t;z),\qquad u(t):=u_{i(z)}(t)\qquad  t\in[0, {T_0}].
$$
From \eqref{low} and from the definition of $T_0$ we  derive that
$$\mathbf d(x(t))\leq \bar \beta (\mathbf d(z), t) \qquad \forall t\in[0, T_0];  \qquad \mathbf d(x(T_0))=r_{i(z)}.$$
Let now $N>0$ and let be defined $T_0,\dots,T_{N-1}$ (satisfying $T_n\leq\hat T_{i(z),n}$ for all $n=0,\dots, N-1$) and  a trajectory-control pair $(x,u)$ on $[0,T_{N-1}]$ satisfying  
$$\mathbf d(x(t))\leq \bar \beta (\mathbf d(z), t), \qquad \forall t\in[0, T_{N-1}],\qquad \mathbf d(x(T_{N-1}))=r_{i(z)+N-1} .$$
 Set $z_N:=x(T_{N-1})$ and observe that $z_N\in \mathcal B_{i(z)+N}$.  Define $ {t_N}:=\inf\{t>0: \ x_{i(z)+N}(t; z_N)\in \mathcal B_{i_0+N+1}\}$. 
Then, in view of \eqref{end}, $ {t_N}\leq \hat t_{i(z)+N}$. Set $T_N:=T_{N-1}+t_N$ and note that $T_N\leq \hat T_{i(z),N}$. We extend the definition of $(x,u)$ to  {$(T_{N-1},T_N]$} as follows
$$x(t)=x_{i(z)+N}(t-T_{N-1};z_N),\quad u(t)=u_{i(z)+N}(t-T_{N-1}),\quad  t\in (T_{N-1},T_N].
$$
 From \eqref{low} and \eqref{bes},  we  deduce that
\begin{equation}\label{step}\mathbf d(x(t))=\mathbf d(x_{i(z)+N}(t-T_{N-1};z_N)) \leq r_{i(z)+N-1},\qquad  t\in {(T_{N-1},T_N]}.\end{equation}
 On the other hand, one has
  $$r_{i(z)+N-1}=b(\mathbf d(z), t)\leq \bar \beta(\mathbf d(z), t), \qquad   t\in[\hat T_{i(z),N-1},\hat T_{i(z),N}].$$
Since $\bar \beta(\d(z),\cdot)$ is decreasing, then  $r_{i(z)+N-1}\leq\bar \beta(\d(z),t)$ for all $t\in[0,\hat T_{i(z),N}]$.
In particular,  $r_{i(z)+N-1}\leq\bar \beta(\d(z),t)$ for all $t\in[0,T_N]$, because $T_N\leq \hat T_{i(z),N}$. This, together with \eqref{step}, implies that
\begin{equation}\label{step2}\mathbf d(x(t)) \leq \bar \beta (\d(z),t) \qquad \forall t\in {(T_{N-1},T_N]}.\end{equation}
Moreover,  we have by construction $\mathbf d (x(T_N))=r_{i(z)+N}$.

So far, we iteratively constructed an admissible trajectory-control pair $(x,u)$ from $z$, which is  defined in $[0,\tilde T)$, where $\tilde T:=\lim_{N\to \infty}T_N$
 ($\le+\infty$),  $\mathbf d(x(T_N))=r_{i(z)+N}$ for all $N\ge0$, and  such  that 
$$\mathbf d(x(t)) \leq \bar \beta (\d(z),t) \qquad \forall t\in[0,\tilde T).$$
 Therefore,  $\mathbf d(x(t))\to 0$ as $t\to \tilde T^-$  and, extending  $x$ to $[0,+\infty)$ as in Definition \ref{Admgen} when $ \tilde T<+\infty$,  this yields
$$\mathbf d(x(t)) \leq \bar \beta (\d(z),t) \qquad \forall t\geq 0,$$
so concluding  the proof. 

\end{proof}

It is worth noting that the results of Theorems \ref{thmsigma},  \ref{PSGAC} are constructive, in the sense that, given a decrease rate $\beta$ associated with the GAC  with $U\cap\sigma$ controls or the sample stabilizability, respectively, we explicitly indicate how to obtain a decrease rate for the GAC.  In addition, thanks to Theorem  \ref{PSGAC}, sufficient conditions for GAC (in the case of unbounded control systems)  obtained in \cite{MR13,LMR16}, follow now as corollaries by  the sample stabilizability results in \cite{LM18,LM20}.

\section{The rescaled system and the impulsive extension}\label{s3}
In this section we establish some relationships between the global asymptotic controllability to $\C$ of a rescaled  control system and the associated impulsive extension. These results will be crucial to obtain the Converse Lyapunov Theorem of Section \ref{s4}.
 
\subsection{GAC  of the rescaled problem}\label{Sres}
Throughout this subsection,  the function $f:(\R^n\setminus\C)\times U\to\R^n$ is continuous and satisfies the  growth assumption {\bf (Hg),\,(i)},  for some growth rate $\nu$. Let   $\f$ denote  the associated rescaled dynamics.
\vskip 0.2 truecm	
For the purpose of distinguishing the admissible trajectory-control pairs of $(f,U)$  from those of $(\f,U)$, we will denote by $(x,u)$ the former and by $(y,v)$ the latter. 
Precisely, we simply say that  $(x,u)$ is an {\em admissible trajectory-control pair} when  $u\in L^\infty_{loc}([0,T_x),U)$, $x\in AC_{loc}([0,T_x),\R^n\setminus\C)$, and $x$ solves the {\em original control system}
	\begin{equation}\label{E}
		\dot x(t) = f(x(t),u(t))   \quad  \text{a.e. } t\in(0,T_x),
	\end{equation} 
	where $\lim_{t\to T^-_x}\d(x(t))=0$ whenever $T_x<+\infty$.  We call $(y,v)$ an {\em admissible rescaled trajectory-control pair}  when $v\in L^\infty_{loc}([0,S_y),U)$, $y\in AC_{loc}([0,S_y),\R^n\setminus\C)$, and $y$ solves the {\em rescaled control system}
	\begin{equation}\label{Er}
		y'(s) = \f(y(s),v(s))   \quad  \text{a.e. } s\in(0,S_y),\ \footnote{ In \eqref{Er} we use the apex \app\app\,$'$\,"  to  denote differentiation with respect to the new parameter $s$, in  order to stress that it does not coincide, in general, with the time variable  $t$, of \eqref{E}, as clarified by Lemma \ref{cres}.}
	\end{equation} 
	where $\lim_{t\to S^-_y}\d(y(s))=0$ whenever $S_x<+\infty$.
 When $T_x$ [$S_y$] is finite, we mean that   $x$  [$y$] is extended to $[0,+\infty)$ as described in Definition \ref{Admgen}.  
 \vskip 0.2 truecm
As an easy consequence of the chain rule,  admissible  rescaled trajectory-control pairs $(y,v)$ are in one-to-one correspondence with  admissible trajectory-control pairs $(x,u)$ through a time-change. 
	\begin{lemma}\label{cres}   Assume  $f$ continuous and satisfying  {\bf{(Hg),\,(i)}}. Fix $z\in\R^n\setminus\C$. 
	
		 {\rm (i)} Given an admissible process  $(x,u)$  from $z$,  set 
		$$
		\displaystyle s(t) :=  \int_0^t \left(1+ \nu(|u(\tau)|)\right)d\tau\, \  \forall t\in[0,T_x),  \ \  S_y:= \lim_{t\to T_x^-}s(t), \quad t(\cdot):=s^{-1}(\cdot).
		$$
		Then $(y, v)(s):=(x,u) \circ t(s)$,   $s\in[0,S_y)$,    is an admissible rescaled trajectory-control pair from $z$.
		
		{\rm  (ii)}  Vice-versa, let  $(y,v)$ be an admissible   rescaled trajectory-control pair from $z$ and set
		$$
		t(s):=\int_0^s (1+\nu(|v(\sigma)|)^{-1} d\sigma \ \ \forall s\in [0,S_{y }),  \ \   T_x:= \lim_{s\to S_y^-}t(s),\quad s(\cdot):=t^{-1}(\cdot).
		$$
		Then,  $(x, u)(t):=(y, v)\circ s(t)$, $t\in[0,T_x)$,   is an admissible  trajectory-control pair from $z$.
	\end{lemma}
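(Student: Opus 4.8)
The plan is to realise the asserted bijection as a pure time‑reparametrization and to transfer the defining integral identity of one system into that of the other by the change‑of‑variables formula for monotone absolutely continuous functions, rather than by a naive pointwise chain rule. In both items the central object is the map carrying the rescaled parameter into the original time: in (i) it is $t(\cdot)=s^{-1}(\cdot)$, and in (ii) it is the function $t(\cdot)$ defined directly by the integral. I would first record its elementary properties. Since $u\in L^\infty_{loc}$ and $\nu$ is increasing and real‑valued, in (i) the map $s(\cdot)$ is locally Lipschitz and strictly increasing (as $s'=1+\nu(|u|)\ge 1$), hence a homeomorphism of $[0,T_x)$ onto $[0,S_y)$ whose inverse $t$ is $1$‑Lipschitz with $t'(\sigma)=(1+\nu(|v(\sigma)|))^{-1}$ a.e. by the inverse‑function rule; symmetrically, in (ii) the map $t(\cdot)$ is itself $1$‑Lipschitz and strictly increasing (as $0<t'=(1+\nu(|v|))^{-1}\le 1$), a homeomorphism of $[0,S_y)$ onto $[0,T_x)$. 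I stress that the inverse $s$ arising in (ii) need not be Lipschitz, nor a priori absolutely continuous — this is exactly the impulsive regime — so I will deliberately route every computation through the Lipschitz map $t$.

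Writing each solution in integral form, both items reduce to the single identity $\int_{t(S_1)}^{t(S_2)} f(x,u)\,d\tau=\int_{S_1}^{S_2}\bar f(y,v)\,d\sigma$, which I obtain from the change‑of‑variables formula $\int_{t(\alpha)}^{t(\beta)}g=\int_\alpha^\beta(g\circ t)\,t'$, valid for the monotone absolutely continuous $t$ and an integrable $g$, applied to $g=f(x(\cdot),u(\cdot))$: indeed $(g\circ t)(\sigma)\,t'(\sigma)=f(y(\sigma),v(\sigma))\,(1+\nu(|v(\sigma)|))^{-1}=\bar f(y(\sigma),v(\sigma))$, using the relations $x\circ t=y$, $u\circ t=v$. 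Reading this identity together with $y(S_2)-y(S_1)=x(t(S_2))-x(t(S_1))$ yields, in (i), the rescaled equation for $y$ out of the original equation for $x$, and in (ii) the original equation for $x$ out of the rescaled one; in particular the derived trajectory lies in $AC_{loc}$ and solves the corresponding ODE a.e. Local integrability of $f(x,u)$ is secured beforehand — freely in (i), since $f(x,u)=\dot x\in L^1_{loc}$, and in (ii) by applying the nonnegative (Tonelli) form of the same formula to $|f(x,u)|$ together with $\bar f(y,v)=y'\in L^1_{loc}$. This reliance on change of variables for merely absolutely continuous reparametrizations is the technical heart of the argument: the control is only $L^\infty_{loc}$ and, in (ii), the inverse reparametrization $s$ can be singular, so a pointwise chain rule for $y\circ s$ would first require proving $s$ absolutely continuous.

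Measurability of $v=u\circ t$ (resp. $u=v\circ s$) is immediate from continuity of the reparametrization, and the composed control is $U$‑valued because $U$ is the common control set. Local essential boundedness I would deduce from the Luzin (N) property of the Lipschitz map $t$: on a compact parameter interval the super‑level set of the composed control is the image under $t$ of a null set, hence null, so the local $L^\infty$ bound is inherited. The initial condition is automatic, $y(0)=x(0)=z$.

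The remaining and most delicate point is the matching of the terminal behaviour required by Definition \ref{Admgen}. From $t(\sigma_2)-t(\sigma_1)\le\sigma_2-\sigma_1$ one always has $T_x\le S_y$, hence $T_x=+\infty\Rightarrow S_y=+\infty$ and $S_y<+\infty\Rightarrow T_x<+\infty$. In (i) this closes the argument: the target condition for $y$ is needed only when $S_y<+\infty$, which forces $T_x<+\infty$, so $\mathbf d(x(t))\to0$ is available and transfers to $\mathbf d(y(s))=\mathbf d(x(t(s)))\to0$ as $s\to S_y^-$. In (ii) the same reasoning disposes of the case $S_y<+\infty$, while $T_x=+\infty$ is vacuous; but the genuinely impulsive regime $S_y=+\infty$, $T_x<+\infty$ — in which $|v|$ blows up while $\bar f$ stays bounded by hypothesis {\bf (Hg)}\,(i) — is exactly where care is required, since the defining inequality for $y$ does not by itself force $\mathbf d(y(s))\to0$ as $s\to+\infty$. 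I expect this to be the main obstacle, to be handled through the approach‑to‑target that is built into the controllability pairs for which this correspondence is invoked.
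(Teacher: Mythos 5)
Your analytic core is correct and, in fact, considerably more detailed than the paper itself, which states Lemma \ref{cres} with no proof beyond the phrase ``an easy consequence of the chain rule'': the change-of-variables formula for monotone absolutely continuous reparametrizations, the transfer of the integral identities, and the Luzin (N) argument for the $L^\infty_{loc}$ bounds are all sound. Two minor corrections along the way. First, your caution that in (ii) the inverse map $s=t^{-1}$ ``can be singular'' is unfounded on $[0,T_x)$: since $v\in L^\infty_{loc}([0,S_y))$, on every compact $[0,S]\subset[0,S_y)$ one has $t'(\sigma)\ge \bigl(1+\nu(\|v\|_{L^\infty([0,S])})\bigr)^{-1}>0$ a.e., so $s$ is locally Lipschitz on $[0,T_x)$ and the pointwise chain rule is available after all; the impulsive degeneration occurs only in the limit $s\to S_y^-$, outside the open domain. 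Second, ``measurability of $v=u\circ t$ \dots is immediate from continuity of the reparametrization'' is not quite right for Lebesgue-measurable controls; it follows instead from the same Luzin (N) property you invoke for boundedness (or by passing to a Borel representative of $u$).

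The genuine gap is exactly the one you flag and then leave open: part (ii) in the regime $S_y=+\infty$, $T_x<+\infty$. This step cannot be closed from the stated hypotheses, because the definition of an admissible rescaled pair imposes $\lim_{s\to S_y^-}\d(y(s))=0$ only when $S_y<+\infty$, and there exist admissible rescaled pairs with $S_y=+\infty$, finite $T_x$, and $\d(y(s))\not\to 0$. Concretely, take $n=1$, $\C=\{0\}$, $U=[0,+\infty)$, $f(x,u)=u\,(x-1)$, $\nu(r)=r$ (so $\f(x,u)=\tfrac{u}{1+u}(x-1)$ satisfies {\bf (Hg),\,(i)}), $z=1$, $y\equiv 1$, $v(s)=e^{s}$: then $(y,v)$ is an admissible rescaled pair with $S_y=+\infty$, while $t(s)=\int_0^s(1+e^{\sigma})^{-1}d\sigma\to\ln 2=T_x<+\infty$ and $\d(x(t))\equiv 1$, so the transformed pair $(x,u)$ violates the terminal condition of Definition \ref{Admgen} and is \emph{not} admissible. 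Thus the lemma read literally fails in this case, and your closing remark — that the obstacle should be ``handled through the approach-to-target built into the controllability pairs'' — is precisely how the paper actually uses it: in the proof of Theorem \ref{thm2} the pairs fed into (ii) satisfy $\d(y(s))\le\beta(\d(z),s)$ with $\beta\in\mathcal{KL}$, which forces $\d(y(s))\to 0$ as $s\to S_y^-$ even when $S_y=+\infty$, whence $\d(x(t))\to 0$ as $t\to T_x^-$. As a proof of the lemma \emph{as stated}, however, this case is missing and cannot be supplied; the honest repair is either to add to (ii) the hypothesis that $\d(y(s))\to 0$ as $s\to S_y^-$ whenever $T_x<+\infty$, or to verify that condition in each application, as the paper's own usage implicitly does.
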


The following theorem establishes the equivalence between the GAC to $\C$ with $U\cap \sigma$ controls of the original and the rescaled control system.
{\begin{theorem}\label{thm2} Assume  $f$ continuous and satisfying  {\bf{(Hg),\,(i)}}. Then,  the original control system \eqref{E} is GAC to $\C$ with $U\cap \sigma$ controls if and only if the rescaled control system \eqref{Er} is GAC to $\C$ with $U\cap \sigma$ controls (for the same $\sigma$). 
\end{theorem}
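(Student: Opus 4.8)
The plan is to transfer everything through the time-change correspondence of Lemma \ref{cres}, observing first that the control constraint \eqref{GACdef2} is preserved verbatim, with the \emph{same} $\sigma$. Indeed, if $(y,v)=(x,u)\circ t$ as in Lemma \ref{cres}(i), then at corresponding parameters the control values and the distances to $\C$ coincide, $v(s)=u(t(s))$ and $\d(y(s))=\d(x(t(s)))$; symmetrically, if $(x,u)=(y,v)\circ s$ as in part (ii), then $u(t)=v(s(t))$ and $\d(x(t))=\d(y(s(t)))$. In either case $|u(t)|\le\sigma(\d(x(t)))$ a.e.\ holds if and only if $|v(s)|\le\sigma(\d(y(s)))$ a.e. Hence only the descent-rate estimate needs to be converted between the two parametrizations.

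The implication ``rescaled $\Rightarrow$ original'' is immediate, \emph{with the same} $\beta$. Given a descent rate $\beta$ for \eqref{Er}, an initial point $z$, and an admissible rescaled pair $(y,v)$ from $z$ with $\d(y(s))\le\beta(\d(z),s)$ and the $\sigma$-bound, define $(x,u)$ by Lemma \ref{cres}(ii). Since the integrand defining $t(s)$ equals $(1+\nu(|v|))^{-1}\le1$, one has $t(s)\le s$, hence $s(t)\ge t$; as $\beta(\d(z),\cdot)$ is nonincreasing,
$$\d(x(t))=\d(y(s(t)))\le\beta(\d(z),s(t))\le\beta(\d(z),t)\qquad\forall t\ge0,$$
so \eqref{E} is GAC to $\C$ with $U\cap\sigma$ controls.

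The nontrivial implication is ``original $\Rightarrow$ rescaled'', where the reparametrization $s(t)=\int_0^t(1+\nu(|u|))\,d\tau\ge t$ \emph{stretches} time, so $\beta$ cannot be reused and a new descent rate $\bar\beta$ must be built. The main point is that the stretching factor is controlled away from $\C$. Fix $z$, set $R:=\d(z)$ and a level $r\in(0,R)$; along a pair $(x,u)$ with $\d(x(t))\le\beta(\d(z),t)$, let $t_r:=\inf\{t\ge0:\d(x(t))\le r\}$. For $t<t_r$ one has $r<\d(x(t))\le\beta(R,0)$, and since $\sigma$ is continuous on $(0,+\infty)$ it is bounded on the compact interval $[r,\beta(R,0)]$ by some $M(R,r)$; using that $\nu$ is increasing and that $t_r\le T^*(R,r):=\inf\{t:\beta(R,t)\le r\}<+\infty$, we obtain
$$s(t_r)=\int_0^{t_r}\bigl(1+\nu(|u(\tau)|)\bigr)\,d\tau\le\bigl(1+\nu(M(R,r))\bigr)\,T^*(R,r)=:\Theta(R,r).$$
Thus the $s$-time needed to descend from level $R$ to level $r$ is bounded by $\Theta(R,r)$, a quantity depending only on $R,r$ (through $\beta,\nu,\sigma$) that never sees the blow-up of $\sigma$ at the target.

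With this uniform bound on the crossing times in hand, I would assemble $\bar\beta$ exactly as in Steps 2--3 of the proof of Theorem \ref{thmsigma}: using the $\beta$-strips $\mathcal B_i$ attached to the sequence $(r_i)$, I would take $T_i$ to be an upper bound, furnished by $\Theta$, on the $s$-time to cross the $i$-th strip downward, enlarge the $T_i$ so that $\sum_{j\ge0}T_{i+j}=+\infty$, and form the piecewise-constant dominating function whose value on the $N$-th block is the appropriate $r_{i+N-2}$; as there, this function is dominated by a $\mathcal{KL}$ function $\bar\beta$. Boundedness throughout is guaranteed by $\d(y(s))=\d(x(t(s)))\le\beta(R,0)$, while the strip-crossing $s$-times give the decay, so the image under Lemma \ref{cres}(i) of the original pair from $z$ is an admissible rescaled pair satisfying $\d(y(s))\le\bar\beta(\d(z),s)$ together with the inherited $\sigma$-bound. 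The only genuinely new ingredient relative to Theorem \ref{thmsigma} is the time-dilation estimate $\Theta$, and the main obstacle is precisely to keep $\int(1+\nu(|u|))$ finite despite $\sigma\to+\infty$ near $\C$; this is resolved by stopping at the first entry time $t_r$, so that $\d(x(\cdot))$, and hence $\sigma(\d(x(\cdot)))$, stays bounded on each strip.
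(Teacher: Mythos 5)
Your proposal is correct and essentially reproduces the paper's argument: the easy direction is handled identically (via $t(s)\le s$, monotonicity of $\beta(\d(z),\cdot)$, and the verbatim transfer of the $\sigma$-bound through Lemma \ref{cres}, with the same $\beta$), and your key dilation quantity $\Theta(R,r)=\bigl(1+\nu\bigl(\max\sigma([r,\beta(R,0)])\bigr)\bigr)\,T^*(R,r)$ is exactly the paper's $S(R,r)$, implicitly defined there by $\beta\left(R,\tfrac{s}{1+N(R,r)}\right)=r$ with $N(R,r)=\nu\bigl(\max\sigma([r,\beta(R,0)])\bigr)$. The only difference is cosmetic, namely how this crossing-time bound is converted into a $\mathcal{KL}$ rate: the paper inverts $r\mapsto S(R,r)$ to obtain $\rho(R,s)$ and verifies the descent estimate by a pointwise contradiction argument, whereas you recycle the strip/piecewise-constant domination of Theorem \ref{thmsigma}.
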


\begin{proof}
 Suppose  first that the rescaled control system  \eqref{Er} is GAC to $\C$ with $U\cap \sigma$ controls,  for some continuous function $\sigma:(0,+\infty)\to(0,+\infty)$ and some descent rate $\beta\in \mathcal K\mathcal L$. Hence, for every   $z\in\R^n\setminus \C$  there is an admissible rescaled trajectory-control pair $(y,v)$   such that $y(0)=z$ and
$$
\mathbf d(y(s))\leq \beta(\mathbf d(z),s), \quad |v(s)|\leq \sigma(\mathbf d(y(s)) \quad \forall s\geq 0.
$$
Consider now $(x,u)(t):=(y,v)\circ s(t)$ where $s(t)$ is the time-change introduced in Lemma \eqref{cres},(ii). Since $s(t)\geq t$ for all $t\geq 0$,  from the monotonicity properties of $\beta(r,\cdot)$ it follows  that
\begin{equation*}\mathbf d(x(t))=\mathbf d(y(s(t)))\leq \beta(\mathbf d(z),s(t))\leq \beta(\mathbf d(z),t) \quad \forall t\geq 0.\end{equation*}
Thus,  $\beta$ is a descent rate also for the original control system \eqref{E}. Moreover, 
\begin{equation*}
|u(t)|=|v(s(t))|\leq \sigma(\mathbf d(y(s(t)))=\sigma(\mathbf d(x(t)) \quad \forall t\in[0,T_x), \end{equation*}
where $T_x=\lim_{s\to S_y^-}t(s)$,  as in Lemma \eqref{cres},(ii). 

\vskip 0.2 truecm To prove the converse implication, let us assume that the original control system  \eqref{E} is GAC to $\C$ with $U\cap \sigma$ controls,  for some continuous function $\sigma:(0,+\infty)\to(0,+\infty)$ and some descent rate $\beta\in \mathcal K\mathcal L$. 

For any couple $(R,r)$ of  real numbers such that $0<r< R$, let us set
$$
N(R,r):= \nu\big(\max \sigma([r,\beta(R,0)]\big).
$$
By the  continuity of $\nu$, $\beta$, and $\sigma$ and by the monotonicity properties of  $\nu$ and $\beta$, one  deduces immediately that the function $N:\{(R,r): \   0<r< R\}\to (0,+\infty)$ is continuous,  for any $R$,   $r\mapsto  N(R,r)$ is decreasing,  and, for any $r$,   $R\mapsto  N(R,r)$ is increasing. By eventually enlarging $N$, we can assume without loss of generality that $N$ is strictly monotone  with respect to both $r$ and $R$. 
 
 Now, let $S(R,r)>0$ be the value of $s$  implicitly defined by the equation
			$$
			 \beta\left(R,\frac{s}{1+N(R,r)}\right)=r. 
			$$
			From the monotonicity  and continuity  properties of $\beta$ and $N$, it follows that    $S$ is a continuous function on $\{(R,r): \ 0<r<R\}$,  such that 
			    $r\mapsto S(R,r)$ is strictly decreasing   and $R\mapsto S(R,r)$ is strictly increasing. As a consequence, if, for any $R$,  we denote by $\rho=\rho(R,s)$ the inverse of the map $\rho\mapsto S(R,\rho)$,  one easily obtains  that  $\rho$ is a ${\mathcal{KL}}$ function. Moreover,  we have the identity
			    \begin{equation}\label{rho}
			    \beta\left(R,\frac{s}{1+N(R,\rho(R,s))}\right)=\rho(R,s).
			    \end{equation}
			 
			Let us show that $\rho$ is a descent rate for the rescaled control system \eqref{Er}. 
			 To this aim, fix $z\in\R^n\setminus\C$ and let $(x,u)$ be an admissible trajectory-control pair of \eqref{E} with $x(0)=z$ and satisfying 
\begin{equation}\label{GACo1}\mathbf d(x(t))\leq \beta(\mathbf d(z),t), \qquad  |u(t)|\leq \sigma(\mathbf d(x(t)) \quad \forall t\geq 0.\end{equation}
Now, define $(y,v)(s):=(x,u)\circ t(s)$ for all $s\in[0,S_y)$,  where
		 $$
		\displaystyle s(t) :=  \int_0^t \left(1+ \nu(|u(\tau)|)\right)d\tau\, \  \forall t\in[0,T_x),  \quad S_y:=\lim_{t\to T_x^-}s(t), \quad  t(\cdot):=s^{-1}(\cdot).
		$$
By Lemma \ref{cres} $(y,v)$  is an admissible trajectory-control pair for $(\bar f,U)$. Furthermore,  the rescaled control $v$ verifies
\begin{equation*}
|v(s)|=|u(t(s))|\leq \sigma(\mathbf d(x(t(s)))= \sigma(\mathbf d(y(s))\quad \forall s\in [0,S_y).
\end{equation*}
 Let us show that $\d(y(s))\le\rho(\d(z),s)$ for every $s\in[0,S_y)$.  
By contradiction, suppose that there exists some  $s\in[0, S_y)$
such that
\begin{equation}\label{contr}\mathbf d(y(s))>\rho(\mathbf d(z),s).\end{equation}
Since $y(s)=x(t(s))$,  
 we deduce that
$$\rho(\mathbf d(z),s)< \mathbf d(x(t(s))\leq \beta(\mathbf d(z),0).$$
 By the definitions of $N$,  $t(s)$,  and the monotonicity properties of $\beta$,  we get
\begin{equation}\label{contrat}
t(s)\geq \frac{s}{1+N(\mathbf d(z),\rho(\mathbf d(z),s))}.
\end{equation}
Using \eqref{GACo1} and \eqref{rho} we obtain the required contradiction with \eqref{contr}. Indeed, \eqref{contrat} and the monotonicity of $\beta$, imply that
$$
\begin{array}{l}
\d(y(s))=\d(x(t(s)))\leq \beta(\mathbf d(z),t(s))\leq \beta\left(\mathbf d(z),\frac{s}{1+N(\mathbf d(z),\rho(\mathbf d(z),s))} \right) \\
\qquad\qquad\qquad\qquad\qquad\qquad\qquad\qquad =\rho(\mathbf d(z),s).
\end{array}
$$
At this point, if  $S_y=+\infty$ the proof is concluded. If instead  $S_y<+\infty$, by definition we extend the trajectory $y$  to $[0,+\infty)$ as described in Definition \ref{Admgen}, so $\d(y(s))=0<\rho(\mathbf d(z),s)$ for every $s\ge S_y$ trivially.
\end{proof}
From Theorems  \ref{thmsigma}, \ref{thm2} one  derives  that, under the Lipschitz continuity assumption  {\bf(Hl)},    the original system is  GAC to $\C$  if and only if  the rescaled system is GAC to $\C$. Precisely, one has:
\begin{theorem}\label{ThEEr} Assume that $f$ satisfies  {\bf(Hl)} and {\bf (Hg),\,(i)}. Then, the original system \eqref{E} is GAC to $\C$ if and only if the rescaled system \eqref{Er} is GAC to $\C$. In addition, for both systems being GAC to $\C$  is equivalent to being GAC to $\C$ with $U\cap \sigma$ controls.
\end{theorem}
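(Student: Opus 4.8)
The plan is to obtain the statement purely by chaining the two equivalences already established: Theorem \ref{thmsigma}, which for a dynamics satisfying {\bf (Hl)} identifies GAC to $\C$ with GAC to $\C$ with $U\cap\sigma$ controls, and Theorem \ref{thm2}, which for a dynamics satisfying {\bf (Hg),\,(i)} matches GAC to $\C$ with $U\cap\sigma$ controls of the original system with the analogous property of the rescaled system \emph{for the same $\sigma$}. The only genuine preliminary is to check that Theorem \ref{thmsigma} may be invoked not only for $f$ but also for the rescaled dynamics $\f$, so that the second part of the statement (GAC $\Leftrightarrow$ GAC with $U\cap\sigma$ controls) holds for \eqref{Er} as well.

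To this end, first I would verify that $\f$ inherits hypothesis {\bf (Hl)}. Continuity of $\f$ on $(\R^n\setminus\C)\times U$ is immediate from {\bf (Hg),\,(i)} (equivalently, from continuity of $f$ together with continuity of the strictly increasing bijection $\nu$). For the Lipschitz bound, fix compact sets ${\mathcal K}\subset\R^n\setminus\C$ and $U_1\subset U$ and let $L$ be the constant supplied by {\bf (Hl)} for $f$ on ${\mathcal K}\times U_1$. Since the rescaling factor $1+\nu(|u|)$ does not depend on the state and satisfies $1+\nu(|u|)\ge 1$, one has
\begin{equation*}
|\f(x,u)-\f(y,u)|=\frac{|f(x,u)-f(y,u)|}{1+\nu(|u|)}\le |f(x,u)-f(y,u)|\le L\,|x-y|
\end{equation*}
for all $x,y\in{\mathcal K}$ and $u\in U_1$. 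Hence $\f$ satisfies {\bf (Hl)} with the very same local Lipschitz constant, and Theorem \ref{thmsigma} applies verbatim to the rescaled control system \eqref{Er}.

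With this in hand, the conclusion assembles as a chain of equivalences. Since $f$ satisfies {\bf (Hl)}, Theorem \ref{thmsigma} gives that \eqref{E} is GAC to $\C$ if and only if it is GAC to $\C$ with $U\cap\sigma$ controls for some continuous $\sigma$; this is the second assertion for the original system. Since $f$ satisfies {\bf (Hg),\,(i)}, Theorem \ref{thm2} then shows that the latter holds if and only if \eqref{Er} is GAC to $\C$ with $U\cap\sigma$ controls, for the same $\sigma$. Finally, applying Theorem \ref{thmsigma} to $\f$ (legitimate by the previous paragraph) turns this into \eqref{Er} being GAC to $\C$, which also gives the second assertion for the rescaled system. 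Reading the chain from both ends yields the stated equivalence between GAC of \eqref{E} and GAC of \eqref{Er}.

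I do not anticipate a serious obstacle, since the substance is already contained in the two cited theorems; the point deserving care is the bookkeeping of the quantifier on $\sigma$. Each equivalence in the chain either produces or consumes an existentially quantified $\sigma$, and one must ensure that the $\sigma$ handed out by Theorem \ref{thmsigma} for \eqref{E} is precisely the one fed into Theorem \ref{thm2}, whose conclusion then preserves it for the rescaled system, before Theorem \ref{thmsigma} (for $\f$) absorbs it again. Once this matching is made explicit, the argument closes with no further computation.
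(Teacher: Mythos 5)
Your proposal is correct and takes essentially the same route as the paper's own proof: both chain Theorem \ref{thm2} with Theorem \ref{thmsigma} applied to $f$ and to $\f$, the only preliminary being that $\f$ inherits \textbf{(Hl)}, which the paper merely asserts and which your one-line estimate $|\f(x,u)-\f(y,u)|=|f(x,u)-f(y,u)|/(1+\nu(|u|))\le L\,|x-y|$ verifies explicitly. Your added care about matching the existentially quantified $\sigma$ across the two theorems is sound and consistent with the paper's (implicit) bookkeeping.
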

\begin{proof}  Theorem \ref{thm2} establishes  that the original system \eqref{E} is GAC to $\C$  with $U\cap \sigma$ controls if and only if the rescaled system \eqref{Er} is GAC to $\C$ with $U\cap \sigma$ controls. To conclude the proof, it is enough to observe that, when $f$ verifies  {\bf(Hl)}  and {\bf (Hg),\,(i)},  the rescaled dynamics $\f$   verifies  {\bf(Hl)} too. Hence, Theorem  \ref{thmsigma} applied both to $f$ and $\f$  implies the  equivalence of the notions of GAC to $\C$ and  GAC to $\C$  with $U\cap \sigma$ controls both for  the original  system and the rescaled system. 
\end{proof}

 \subsection{GAC of the impulsive extension}\label{Simp}
Let $f$ be a continuous function satisfying {\bf (Hg)}, and let   $\nu$, $\f$, $F$, $\U$ be  as in {\bf (Hg)}. We now  embed the original system into  an extended control system and show that  GAC to $\C$ of the rescaled control system implies GAC to $\C$ of the extended  system.  
\begin{definition}[Admissible extended trajectory-control pairs]\label{Dext} A triple $(y,w_0,w)$  is called an {\em admissible extended  trajectory-control pair}    if   there exists $S_y\le +\infty$ such that:  the control  $(w_0,w)\in L^\infty([0,S_y), \overline{\U})$;    the trajectory $y\in AC([0,S_y),\R^n\setminus \C)$  is a solution of the   {\em extended control  system} 
  \begin{equation}\label{Es}
y'(s) = F(y(s), w_0(s),w(s)) \qquad\text{a.e. $s\in[0,S_y)$;}
\end{equation}
and,  if $S_y<+\infty$, one has $\lim_{s\to S_y^-}\d(y(s))=0$. 	
 If $(y,w_0,w)$ is an admissible extended trajectory-control pair  and $S_y <+\infty$,   we extend $y$ to $[0,+\infty[$ by setting $y(s):= \lim_{\sigma\to S_y^-}y(\sigma)$ for any $s\geq S_y$.\footnote{In this case, the limit at  $S_y$ always exists, since $F$ is bounded on any neighborhood of the target, which has compact boundary.}
\end{definition}

The original and the rescaled system can be embedded in the extended system as follows. 
 \begin{lemma}\label{L1ext} Assume that $f$ is a continuous function satisfying   {\bf (Hg)} for some growth rate $\nu$.  Let  $(x,u)$ be  an admissible trajectory-control pair  for  $(f,U)$.  Set 
   $$
\begin{aligned} 
\displaystyle s(t) &:=  \int_0^t \left(1+ \nu(|u(\tau)|)\right)d\tau\, \  \forall t\in[0,T_x),  \ \  S_y:= \lim_{t\to T_x^-}s(t), \quad t:=s^{-1},\\   
y(s)& :=x \circ t(s) \ \  \forall s\in[0,S_y), \\   
 (w_0,w)(s)&:=\displaystyle\left(1 ,  \frac{u\,\nu(|u|)}{|u|}\circ t(s) \right)\,t'(s) = \left(\frac{1}{1+\nu(|u|)}\, , \,  \frac{u\,\nu(|u|)}{|u|(1+\nu(|u|))} \right)\circ t(s)
\end{aligned}
$$
for a.e.   $s\in[0,S_y)$. Then $(y, v)$, where  $v(s):=  u \circ t(s)$  for a.e. $s\in[0,S_y)$,    is an admissible rescaled trajectory-control pair,  while  $(y,w_0,w)$    is an admissible extended trajectory-control pair, with  $w_0>0$   a.e.  on $[0,S_y)$.
 \end{lemma}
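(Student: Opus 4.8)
The plan is to reduce everything to Lemma~\ref{cres}(i) together with a pointwise identification of the extended dynamics $F$ with the rescaled dynamics $\f$ along the curve $(y,v)$. First I would note that the assertion that $(y,v)$ is an admissible rescaled trajectory-control pair is \emph{exactly} the content of Lemma~\ref{cres}(i): here $s(t)$, $t(s)$, $y=x\circ t$ and $v=u\circ t$ are defined precisely as there, so $y\in AC_{loc}([0,S_y),\R^n\setminus\C)$, $y'(s)=\f(y(s),v(s))$ for a.e.\ $s$, and $\lim_{s\to S_y^-}\d(y(s))=0$ whenever $S_y<+\infty$.

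It then remains to check the three requirements of Definition~\ref{Dext} for $(y,w_0,w)$. From the explicit (second) expression for $(w_0,w)$ one computes, at a.e.\ $s$,
$$
w_0(s)=\frac{1}{1+\nu(|v(s)|)},\qquad |w(s)|=\frac{\nu(|v(s)|)}{1+\nu(|v(s)|)},
$$
so that $w_0(s)+|w(s)|=1$ and, since $\nu\ge0$, $w_0(s)>0$; hence $(w_0,w)(s)\in\U\subset\overline\U$ and $w_0>0$ a.e. Moreover $w(s)$ is a nonnegative multiple of $v(s)\in U$, so $w(s)\in U$ because $U$ is a cone. Boundedness is immediate from $0\le w_0(s),|w(s)|\le1$, and measurability follows from that of $u$, the continuity of $t(\cdot)$ and $\nu$, and the convention $w/|w|=0$ at $w=0$; thus $(w_0,w)\in L^\infty([0,S_y),\overline\U)$.

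The heart of the argument is to show that $y$ solves the extended system, i.e.\ that
$$
F(y(s),w_0(s),w(s))=\f(y(s),v(s))\qquad\text{for a.e.\ }s,
$$
after which $y'(s)=F(y(s),w_0(s),w(s))$ follows from Lemma~\ref{cres}(i). I would evaluate $F$ directly from its definition. Since $w_0(s)>0$, the map $r\mapsto \f\big(y(s),\tfrac{w(s)}{|w(s)|}\,\nu^{-1}(\tfrac{|w(s)|}{r})\big)$ is continuous at $r=w_0(s)$ (by continuity of $\nu^{-1}$ and of $\f$, the latter from {\bf (Hg),\,(i)}), so the one-sided limit defining $F$ collapses to the value at $r=w_0(s)$. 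Plugging $r=w_0(s)$ and using $\tfrac{|w(s)|}{w_0(s)}=\nu(|v(s)|)$ gives $\nu^{-1}(\tfrac{|w(s)|}{w_0(s)})=|v(s)|$ and $\tfrac{w(s)}{|w(s)|}=\tfrac{v(s)}{|v(s)|}$, whence $\tfrac{w(s)}{|w(s)|}\,\nu^{-1}(\tfrac{|w(s)|}{w_0(s)})=v(s)$ and therefore $F(y(s),w_0(s),w(s))=\f(y(s),v(s))$; the degenerate case $v(s)=0$ is handled by the convention $w/|w|=0$, which yields $w_0(s)=1$, $w(s)=0$ and $F(y(s),1,0)=\f(y(s),0)=\f(y(s),v(s))$. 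Finally the terminal condition $\lim_{s\to S_y^-}\d(y(s))=0$ when $S_y<+\infty$ is inherited from Lemma~\ref{cres}(i), and the extension of $y$ past $S_y$ is exactly the one prescribed in Definition~\ref{Dext}. The main obstacle is precisely this limit computation for $F$: one must use $w_0(s)>0$ crucially to replace the one-sided limit by a plain evaluation, since only at $w_0=0$ does the control-compactification genuinely act.
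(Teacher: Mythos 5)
Your proof is correct and follows essentially the same route as the paper's: admissibility of $(y,v)$ is delegated to Lemma~\ref{cres}(i), and admissibility of $(y,w_0,w)$ is reduced to the pointwise identity $F(y(s),w_0(s),w(s))=\f(y(s),v(s))$, which the paper dismisses as ``straightforward calculations'' and which you carry out explicitly (collapsing the one-sided limit in the definition of $F$ at $w_0(s)>0$, checking $w_0+|w|=1$, $w\in U$, measurability, and the degenerate case $v(s)=0$). No discrepancies; you have simply filled in the details the paper leaves to the reader.
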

\begin{proof}  The rescaled trajectory-control pair  $(y, v)$ is admissible by Lemma \ref{cres}, (i).  In view of the  definition of $(w_0,w)$ and of $F$,  straightforward   calculations yield that
\begin{equation}\label{vw}
y'(s)=\f(y(s),v(s))=F(y(s),w_0(s),w(s)), \quad \text{a.e. $s\in[0,S_y)$.}
\end{equation}
This shows that  $(y,w_0,w)$ is an admissible extended trajectory-control pair with $w^0>0$ a.e.. 
\end{proof}

\begin{remark}\label{rmkrecession}
{\rm  When   $\nu(r)=r^{\,\bar d}$ for some integer $\bar d\ge 1$,  the   extended dynamics   $F$   is equivalent to the extended dynamics  introduced in \cite{RS00}, whose definition is based on the notion of   {\em recession function}. Indeed, if  for any   $(x,w_0,w)\in  \overline{(\R^n\setminus\C)}\times  \overline{{\U}}$ we set $(\tilde w_0,\tilde w):=\left( w_0^{1/\,\bar d}, \frac{w}{|w|}\, |w|^{1/\,\bar d}\right)$, then $\tilde w_0^{\,\bar d}+ |\tilde w|^{\,\bar d}=1$ and  we obtain that 
$$
\begin{array}{l}
\displaystyle F(x,\tilde w_0,\tilde w)=\lim_{r\to \tilde w_0^+}\f\left(x,\frac{\tilde w}{r}\right)=\lim_{r\to \tilde w_0^+}f\left(x,\frac{\tilde w}{r}\right)\, r^{\,\bar d}.
\end{array}
$$
In particular, in the case of control-affine $f$ one has   $\bar d=1$ and   $(\tilde w_0,\tilde w)\equiv(w_0,w)$, so that  the classical  impulsive extension of the graph-completion approach  considered in \cite{LM19},  coincides with the present one. }
\end{remark}
The extension consists  in considering   $(y,w_0,w)$,  where $w_0$ may be zero on  nondegenerate subintervals of $[0,S_y)$. On these intervals, the time variable $t=\int_0^sw_0(\sigma)\,d\sigma$ is constant  --i.e. the time  stops--, while the state variable $y$ evolves, according to the equation  $y'=F(t,y,0,w)$,  sometimes called the {\em fast dynamics}. For this reason,  system \eqref{Es} is  often referred to  as the {\em impulsive extension} of the original control system \eqref{E}, despite the fact that it is an ordinary control system, as the extended controls  $(w_0,w)$ take values in the compact set $\overline{\U}$ and the trajectory $y$ is absolutely continuous.   A detailed discussion of this topic  goes beyond the purposes of the paper. We just mention  that an equivalent, $t$-based description of this extension, where $u$ is no more a function and the trajectory $x$ is  a discontinuous map whose   total variation is bounded on $[0,T]$ for every $T<T_x$, but    possibly unbounded    on $[0,T_x)$, in short  $x\in BV_{loc}[0,T_x)$, could be given  (see  \cite{KDPS14} and also \cite{KDPS15,AR15,MS18,MS20}).

\begin{proposition}\label{p3}
Assume  $f$  continuous and satisfying {\bf (Hg)}. If the rescaled system \eqref{Er} is GAC to $\C$,  then the extended system \eqref{Es} is GAC to $\C$.
\end{proposition}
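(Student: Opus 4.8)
The plan is to exploit the explicit correspondence between rescaled and extended trajectory-control pairs already encoded in Lemma \ref{L1ext}. The crucial observation is that the \emph{same} curve $y$, parametrized by the \emph{same} variable $s$, can serve simultaneously as a rescaled trajectory and as an extended trajectory: one only needs to recode the rescaled control $v$ into an extended control $(w_0,w)$ with $w_0>0$. Since this recoding is a pointwise, parameter-preserving operation, it transports any decay estimate verbatim, so a descent rate for \eqref{Er} will automatically be a descent rate for \eqref{Es}.

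Concretely, suppose \eqref{Er} is GAC to $\C$ with descent rate $\beta\in\mathcal{KL}$. Fix $z\in\R^n\setminus\C$ and let $(y,v)$, defined on $[0,S_y)$, be an admissible rescaled trajectory-control pair from $z$ with $\d(y(s))\le\beta(\d(z),s)$ for all $s\ge0$. For a.e. $s$ I set
$$ w_0(s):=\frac{1}{1+\nu(|v(s)|)}, \qquad w(s):=\frac{v(s)\,\nu(|v(s)|)}{|v(s)|\,(1+\nu(|v(s)|))}, $$
with the convention $v/|v|=0$ when $v=0$. Then $w_0+|w|\equiv1$ and $w_0>0$, so $(w_0,w)$ takes values in $\U\subset\overline{\U}$; being measurable and valued in the compact set $\overline{\U}$, it lies in $L^\infty([0,S_y),\overline{\U})$, as required by Definition \ref{Dext}. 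The key point is that $y$ still solves the extended system with this control, because $\frac{w}{|w|}\,\nu^{-1}\!\left(\frac{|w|}{w_0}\right)=v$ and hence $F(y,w_0,w)=\f(y,v)$ a.e.; this is exactly the identity \eqref{vw} established inside the proof of Lemma \ref{L1ext} (equivalently, by Lemma \ref{cres}\,(ii) the pair $(y,v)$ arises from an admissible original pair $(x,u)$, to which Lemma \ref{L1ext} applies directly). Thus $(y,w_0,w)$ is an admissible extended trajectory-control pair from $z$: the terminal condition $\d(y(s))\to0$ as $s\to S_y^-$ and the extension of $y$ beyond $S_y$ are inherited, being properties of the unchanged curve $y$. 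Since neither $y$ nor $s$ is altered, $\d(y(s))\le\beta(\d(z),s)$ persists, and the very same $\beta$ witnesses that \eqref{Es} is GAC to $\C$.

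I expect essentially \emph{no} substantive obstacle in this direction: the reparametrization is explicit and parameter-preserving, and the only verifications—measurability and essential boundedness of $(w_0,w)$, together with the pointwise identity $F=\f$—are immediate from {\bf (Hg)} and Lemma \ref{L1ext}. The genuine difficulty is reserved for the \emph{converse} implication (GAC of the extended system implying GAC of the rescaled one), which is why it is treated separately: there one must contend with extended trajectories that travel along the fast dynamics $w_0=0$ on nondegenerate intervals, and such trajectories need not be realizable, with the same endpoints, by rescaled trajectories for which $w_0>0$.
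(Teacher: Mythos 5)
Your proof is correct and follows essentially the same route as the paper: both take the rescaled pair $(y,v)$ with descent rate $\beta$, recode $v$ pointwise into the extended control $(w_0,w)(s)=\left(\frac{1}{1+\nu(|v(s)|)}\,,\,\frac{v(s)\,\nu(|v(s)|)}{|v(s)|(1+\nu(|v(s)|))}\right)\in\U$, and invoke the identity $F(y,w_0,w)=\f(y,v)$ from Lemma \ref{L1ext} so that the same curve $y$, same parameter $s$, and same $\beta$ witness GAC of \eqref{Es}. Your closing remark about the converse direction being the genuinely harder one (fast dynamics with $w_0=0$ not being realizable by rescaled trajectories) also matches the paper's own discussion.
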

\begin{proof}
Since the rescaled control system \eqref{Er} is GAC to $\C$,  there exists a descent rate $\beta\in \mathcal{KL}$ such that  for all $z\in \R^n\setminus \C$   there is an admissible rescaled trajectory-control pair $(y,v)$, such that $y(0)=x$ and 
\begin{equation}\label{eqex}
\d(y(s))\leq \beta(\d(z),s)\quad \forall  s\geq 0.\end{equation} 
Define for a.e.  $s\in[0,S_y)$ the extended control  
$$(w_0,w)(s)= \left(\frac{1}{1+\nu(|v(s)|)}\, , \,  \frac{v(s)\,\nu(|v(s)|)}{|v(s)|(1+\nu(|v(s)|))} \right).$$
Since $(w_0,w)(s)\in \U$  for  a.e. $s\in[0,S_y)$, from Lemma \ref{L1ext} it follows that  $(y,w_0,w)$ is an admissible extended trajectory-control pair for \eqref{Es} and this, together with \eqref{eqex} and the arbitrariness of $z$,  implies that the extended system  \eqref{Es} is GAC to $\C$,  with the same descent rate $\beta$ as the rescaled system \eqref{Er}.  
\end{proof}


\section{A Converse  Lyapunov Theorem}\label{s4}
In this section we state our main result, which  extends to control systems with unbounded controls and   their impulsive extensions well-known relationships between GAC, sample stabilizability, and  existence of control Lyapunov functions.  Furthermore, we relate explicit stabilizing feedback constructions for the extended  and for the original system,  which are based on the existence of a {\em semiconcave} control Lyapunov function. 

\vskip 0.2 truecm
\subsection{Main result}
To begin with, let us introduce  the notion of (nonsmooth) control Lyapunov function. In the following, given an open set $\Omega\subseteq\R^N$, a continuous function $W:\overline{\Omega} \to[0,+\infty)$ is said {\em positive definite on $\Omega$} if  $W(x)>0$ \,$\forall x\in\Omega$ and $W(x)=0$ \,$\forall x\in\partial\Omega$. The function $W$ is called {\em proper  on $\Omega$}  if the pre-image $W^{-1}(K)$ of any compact set $K\subset[0,+\infty)$ is compact.   
  As customary,    $\partial_PW(x)$  refers to the {\em proximal subdifferential of $W$ at $x$} (which may very well be empty). We recall that $p$ belongs to $\partial_PW(x)$  if and only if there exist $\sigma$ and $\eta>0$ such that
$$
W(y)-W(x)+\sigma|y-x|^2\ge \langle p\,,\, y-x\rangle \qquad \forall y\in  B_\eta(\{x\}).\C
$$
The {\em limiting subdifferential  $\partial_LW(x)$ of $W$ at $x\in \Omega$,}  is defined as
	$$
	\displaystyle \partial_LW(x) := \Big\{\lim_{i\to+\infty} \,  p_i: \  p_i\in \partial_PW(x_i), \ \lim_{i\to+\infty} x_i=x\Big\}.
	$$
 When the function $W$ is locally Lipschitz continuous on $\Omega$, the limiting subdifferential $\partial_LW(x)$ is nonempty at every point, the set-valued map $x\rightsquigarrow \partial_LW(x)$ is upper semicontinuous, and the  Clarke generalized gradient at $x$  can be derived as co\,$\partial_LW(x)$.  As sources for  nonsmooth analysis  we refer e.g. to \cite {CS,CLSW,Vinter}.
 \vskip 0.2 truecm
  For any nonempty closed set  $\mathbf U\subseteq \R^M$ for some integer $M>0$ and any continuous function   $\mathbf f:(\R^n\setminus\C)\times \mathbf U\to\R^n$,  let  consider the control system
\begin{equation}\label{Egenex}
\dot x=\mathbf f(x,u),  \qquad u\in  \mathbf U, 
\end{equation}
and   the {\em Hamiltonian}  $H_{\mathbf f,\mathbf U}:(\R^n\setminus\C)\times U\to[-\infty,+\infty)$, given by
	\begin{equation}\label{Ham}
	\displaystyle H_{\mathbf f,\mathbf U}(x,p):=\inf_{u\in \mathbf U}\left\{\langle p\,,\,\mathbf f(x,u)\rangle\right\}.
	\end{equation} \
	Notice that  $H_{\mathbf f,\mathbf U}$ may be discontinuous and   equal to $-\infty$ at some points. 	
	
	\begin{definition}[Control Lyapunov Function]\label{defCLF} Let $W:\overline{\R^n\setminus {\C}}\to[0,+\infty)$ be a  locally Lipschitz continuous function on $\overline{\R^n\setminus {\C}}$,  which is positive definite  and  proper on $\R^n\setminus\C$. We say that $W$  is a \emph{Control Lyapunov Function,}  (CLF),  {for the system \eqref{Egenex}} if there exists some continuous, strictly increasing function $\gamma:(0,+\infty)\to(0,+\infty)$, that we call a {\em decrease rate}, such that the following {\emph{(infinitesimal) decrease condition}} is satisfied:
		\begin{equation}\label{CLF} 
		H_{\mathbf f,\mathbf U} (x,  \partial_L W(x) )<-\gamma(W(x)) \quad \forall x\in {{\R^n}\setminus\C}.  \,\footnote{This  means that $H_{\mathbf f,\mathbf U}(x, p )<-\gamma(W(x))$ for every $p\in {\partial_L}W(x)$.} 
		\end{equation}
	\end{definition}
	
 \begin{remark}\label{RpartialP} {\rm  If the continuous function  $\mathbf f:  ( \R^n\setminus \C)\times \mathbf U\to \R^n$ is bounded in $(B_R(\C)\setminus\C)\times  \mathbf U$   for some $R>0$  and continuous in $x$ uniformly with respect to $  \mathbf U$ --as it is for the rescaled dynamics $\f$ and for the extended dynamics $F$--,  then the Hamiltonian $H_{\mathbf f,\mathbf U}$ is continuous and   the decrease condition \eqref{CLF} is equivalent to the usual condition
		\begin{equation}\label{CLFP} 
		H_{\mathbf f,\mathbf U} (x, \partial_PW(x) )<- V(x) \quad \forall x\in {{\R^n}\setminus\C},
		\end{equation}
 expressed in terms of the proximal subdifferential,  for some continuous function  $V:\overline{\R^n\setminus {\C}}\to[0,+\infty)$, which is positive definite  and  proper on $\R^n\setminus\C$,   used  e.g. in  \cite{CLSS,CLRS,KT04} (see \cite[Prop. 4.2]{LM18}). Incidentally, in this case \eqref{CLF} has also an equivalent formulation, which involves the  Dini derivative.}
\end{remark} 

 The rescaled system and the extended system   share the same CLFs.
 \begin{proposition}\label{p2} Assume   $f:(\R^n\setminus\C)\times U\to \R^n$ continuous and satisfying  assumption {\bf (Hg)}. A  map $W:\R^n\setminus\C\to\R$ is a CLF   for the rescaled problem \eqref{Er} if and only if it is a CLF for the extended problem \eqref{Es}.
\end{proposition}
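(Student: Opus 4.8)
The plan is to reduce the statement to the identity of the two Hamiltonians. Since the requirements that $W$ be locally Lipschitz, positive definite and proper on $\R^n\setminus\C$ make no reference to the dynamics, and since the decrease rate $\gamma$ and the limiting subdifferential $\partial_L W(x)$ are the same in both problems, it suffices to prove that
$$
H_{\f,U}(x,p)=H_{F,\overline{\U}}(x,p)\qquad\forall\, x\in\R^n\setminus\C,\ \forall\, p\in\R^n.
$$
Indeed, once this is established, the decrease condition \eqref{CLF} for the rescaled system \eqref{Er} and the one for the extended system \eqref{Es} become literally the same inequality, and both implications of the proposition follow at once.

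First I would use the correspondence recorded in Lemma \ref{L1ext}. For every $v\in U$ the pair
$$
(w_0,w):=\left(\frac{1}{1+\nu(|v|)}\,,\,\frac{v\,\nu(|v|)}{|v|(1+\nu(|v|))}\right)
$$
satisfies $w_0>0$ and $w_0+|w|=1$, hence belongs to $\U$, and by \eqref{vw} one has $F(x,w_0,w)=\f(x,v)$. Because $U$ is a closed cone, the assignment $v\mapsto(w_0,w)$ maps $U$ \emph{onto} $\U$, the inverse being $(w_0,w)\mapsto \nu^{-1}(|w|/w_0)\,w/|w|$ (and $(1,0)\mapsto 0$). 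Taking infima along this correspondence, and using $F(x,w_0,w)=\f(x,v)$, therefore gives
$$
\inf_{(w_0,w)\in\U}\langle p\,,\,F(x,w_0,w)\rangle=\inf_{v\in U}\langle p\,,\,\f(x,v)\rangle=H_{\f,U}(x,p).
$$

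It remains to pass from $\U$ to its closure $\overline{\U}$, which is exactly where $H_{F,\overline{\U}}$ is computed and which also contains the ``controls at infinity'' $(0,w)$, $|w|=1$, corresponding to no $v\in U$. Here I would invoke hypothesis {\bf (Hg),\,(ii)}: the map $F$ is continuous on $\overline{(\R^n\setminus\C)}\times\overline{\U}$ and $\overline{\U}$ is compact, so for fixed $x$ and $p$ the function $(w_0,w)\mapsto\langle p\,,\,F(x,w_0,w)\rangle$ is continuous on the compact set $\overline{\U}$; since $\U$ is dense in $\overline{\U}$, its infimum over $\overline{\U}$ equals its infimum over $\U$, and combining this with the previous display yields $H_{F,\overline{\U}}(x,p)=H_{\f,U}(x,p)$. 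The hard part is precisely this passage to the closure: a priori the boundary $w_0=0$ could drive the infimum strictly below the value approached over $\U$, and it is exactly the continuity (and local boundedness) of $F$ on $\overline{\U}$ provided by {\bf (Hg),\,(ii)} that prevents this. With the Hamiltonians shown equal, condition \eqref{CLF} for \eqref{Er} coincides with \eqref{CLF} for \eqref{Es}, so $W$ is a CLF for the rescaled problem if and only if it is a CLF for the extended one.
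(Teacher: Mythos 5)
Your proof is correct and takes essentially the same route as the paper's: the paper likewise reduces everything to the Hamiltonian identity $H_{\f,U}\equiv H_{F,\overline{\U}}$ via exactly the correspondence \eqref{eqresex} (packaged there as a two-sided chain of inequalities rather than an explicit bijection), with the same continuity-plus-density passage from $\U$ to $\overline{\U}$. If anything, you spell out more explicitly the step $\inf_{\overline{\U}}=\inf_{\U}$, which the paper asserts without comment.
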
 
\begin{proof}
Consider the maps
\begin{equation}\label{eqresex}
\begin{aligned}
\displaystyle u&\mapsto (w_0,w)(u):=  \left(\frac{1}{1+\nu(|u|)}\, , \,  \frac{u\,\nu(|u|)}{|u|(1+\nu(|u|))} \right) \quad \forall u\in U, \\
\displaystyle (w_0,w)&\mapsto u(w_0,w):= \frac{w}{|w|}\,\nu^{-1}\left(\frac{|w|}{w_0}\right)  \quad \forall (w_0,w)\in  \U,
\end{aligned}
\end{equation}
where $\U$ and $\nu$ are as in   {\bf (Hg)}. The definitions of $\f$ and $F$   imply that  
$$
\begin{array}{l}
\displaystyle F(x,(w_0,w)(u))=\f(x,u)\qquad \forall (x,u)\in (\R^n\setminus\C)\times U, \\ 
\displaystyle \f(x,u(w_0,w))=F(x,w_0,w)  \ \ \forall (x,w_0,w)\in (\R^n\setminus\C)\times \U. 
\end{array}
$$
Thus, for any $x\in\R^n\setminus\C$ and   $p\in \R^n$, one has  
$$
\begin{array}{l} 
\displaystyle \inf_{u\in U}\left\{\langle p\,,\,\f(x,u)\rangle\right\}=\inf_{u\in U}\left\{\langle p\,,\,F(x,(w_0,w)(u))\rangle\right\} \\
\displaystyle\qquad\ \qquad\quad\qquad\qquad\ge \inf_{(w_0,w)\in   \overline{\U}}\left\{\langle p\,,\,F(x,w_0,w)\rangle\right\} \\
\displaystyle\quad\qquad\qquad\qquad\qquad=\inf_{(w_0,w)\in   \U}\left\{\langle p\,,\,F(x,w_0,w)\rangle\right\}\\
\displaystyle\quad\qquad\qquad\qquad\qquad=\inf_{(w_0,w)\in   \U }\left\{\langle p\,,\,\f(x,u(w_0,w))\rangle\right\} \\
\displaystyle\qquad\ \qquad\quad\qquad\qquad\ge \inf_{u\in U}\left\{\langle p\,,\,\f(x,u)\rangle\right\}.
\end{array}
$$
Therefore $H_{\f,U}\equiv H_{F, \,\overline{\U}}$. 
As a consequence,  a map $W$ is a CLF for  \eqref{E} if and only if it is a CLF for \eqref{Es}.   
\end{proof}

We are   ready to state the main result of the paper. To this aim, we introduce the following  stronger assumptions.
 \vskip 0.2 truecm
\noindent   {\bf (Hg)$^*$} {\em The function $f:\R^n\times U\to \R^n$ is continuous  and a stronger version of hypothesis  {\bf (Hg)} is valid, where $\R^n$ replaces $\R^n\setminus\C$. Furthermore,   for any compact set $\mathcal{K}\subset\R^n$ there is some constant  $\bar L>0$ such that
$$
|F(x,w_0,w)-F(y,w_0,w)|\le L|x-y| \qquad \forall x,y\in\mathcal{K}, \ \forall (w_0,w)\in\bar\U. 
$$ }
\begin{theorem}[Converse Lyapunov Theorem]\label{thm3}
Assume hypothesis {\bf (Hg)$^*$}. Then the following properties are equivalent:
\begin{itemize}
\item[{\rm (i)}] the original control system  $\dot x=f(x,u)$ is GAC to $\C$;
\item[{\rm (ii)}] the extended control system $y'=F(y,w_0,w)$ is GAC to $\C$;
\item[{\rm (iii)}] there exists a CLF for  the extended control system  $y'=F(y,w_0,w)$;
\item[{\rm (iv)}] there exists a CLF for  the original control system  $\dot x=f(x,u)$;
\item[{\rm (v)}]   the system  $\dot x=f(x,u)$ is sample stabilizable to $\C$;
\item[{\rm (vi)}]  the  system $y'=F(y,w_0,w)$ is sample stabilizable to $\C$.
\end{itemize}
 \end{theorem}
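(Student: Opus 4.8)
The plan is to establish the six equivalences by running the single cycle (i)$\Rightarrow$(ii)$\Rightarrow$(iii)$\Rightarrow$(iv)$\Rightarrow$(v)$\Rightarrow$(i) and then attaching (vi) through the two extra arrows (iii)$\Rightarrow$(vi) and (vi)$\Rightarrow$(ii). The organizing idea is that, although $f$ is unbounded and the Hamiltonian $H_{f,U}$ may be discontinuous and take the value $-\infty$, the extended dynamics $F$ is continuous, locally bounded and, thanks to the Lipschitz requirement in {\bf (Hg)$^*$}, locally Lipschitz in $x$ uniformly with respect to the \emph{compact} control set $\overline{\U}$. Hence the extended system $y'=F(y,w_0,w)$ meets the classical hypotheses for bounded controls and Lipschitz dynamics, and the known converse Lyapunov theory can be cited verbatim for it; every statement about the original system is then reached by transferring information back and forth through the rescaled system.

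\emph{The transfers that are essentially free.} Under {\bf (Hg)$^*$} both {\bf (Hl)} and {\bf (Hg),(i)} hold, so Theorem \ref{ThEEr} identifies GAC of \eqref{E} with GAC of the rescaled system \eqref{Er}, and Proposition \ref{p3} upgrades the latter to GAC of the extended system: this gives (i)$\Rightarrow$(ii). For (ii)$\Rightarrow$(iii) I would invoke the classical converse Lyapunov theorem for the extended system (see \cite{S83,R00,R02}), which moreover may be arranged to produce a \emph{semiconcave} CLF $W$ --- a property I will need afterwards. The arrow (iii)$\Rightarrow$(iv) rests on Proposition \ref{p2}, which yields $H_{\f,U}\equiv H_{F,\overline{\U}}$ and hence the coincidence of the CLFs of the rescaled and extended systems; the passage from such a (rescaled) CLF to an original one is then immediate and preserves both $W$ and the decrease rate, since for any $u$ realizing $\langle p,\f(x,u)\rangle<-\gamma(W(x))$ one has $\langle p,f(x,u)\rangle=(1+\nu(|u|))\,\langle p,\f(x,u)\rangle<-(1+\nu(|u|))\,\gamma(W(x))\le-\gamma(W(x))$, so that $H_{f,U}(x,p)<-\gamma(W(x))$. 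Finally, (v)$\Rightarrow$(i) is exactly Theorem \ref{PSGAC} applied to the continuous map $f$, while (iii)$\Rightarrow$(vi) and (vi)$\Rightarrow$(ii) are, respectively, the classical ``CLF $\Rightarrow$ sample stabilizability'' construction for the extended system (see \cite{CLSS,CLRS,KT04}) and Theorem \ref{PSGAC} applied to the continuous map $F$.

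\emph{The decisive node.} Everything concentrates on (iv)$\Rightarrow$(v): from the CLF inherited in the previous step --- which, being the same function $W$, is still semiconcave --- one must manufacture a \emph{locally bounded} stabilizing feedback $K:\R^n\setminus\C\to U$ for the \emph{original} system, necessarily allowing $\limsup_{x\to\partial\C}|K(x)|=+\infty$, whose sampling trajectories obey \eqref{betaS}. The mechanism that makes this possible is that the decrease inequality is strict and that $H_{\f,U}(x,p)=\inf_{u\in U}\langle p,\f(x,u)\rangle$ is an infimum over the \emph{finite} controls of $U$: even though this value also equals the infimum over the larger set $\overline{\U}$, and may be attained at a ``point at infinity'' $w_0=0$, strictness guarantees that for every $x$ one can select a genuinely finite $u=K(x)\in U$ --- equivalently an extended control with $w_0>0$ --- realizing $\langle p,\f(x,u)\rangle<-\gamma(W(x))$. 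Composing the extended feedback with the map $(w_0,w)\mapsto \frac{w}{|w|}\,\nu^{-1}(|w|/w_0)$ of \eqref{eqresex} then produces the feedback for \eqref{E} described in Section \ref{s4}. (To accommodate an a priori arbitrary CLF in (iv), I would first establish the descent variant (iv)$\Rightarrow$(i) --- the strict decrease holds in the original time, at the uniform rate $-\gamma(W)$ --- and re-enter the cycle to regenerate a semiconcave CLF.)

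\emph{Where the difficulty lies.} The main obstacle is precisely the verification that the sample-and-hold trajectories of the original system decrease $W$ at the prescribed rate. Since $f=(1+\nu(|u|))\f$, the field $f$ is neither bounded nor Lipschitz in $x$ uniformly in $u$, so the elementary one-step estimate available in the bounded-control case fails, and trajectories may present finite blow-up times and chattering. To bound the error committed by freezing the control over a sampling interval I would exploit the \emph{semiconcavity} of $W$ secured in (ii)$\Rightarrow$(iii): it furnishes a one-sided quadratic estimate which, on each $\beta$-strip $\{r_i\le\d(x)\le r_{i-1}\}$, is dominated by the strict first-order decrease once the sampling diameter $\delta(R,r)$ is chosen small enough. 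Iterating strip by strip, in the spirit of Steps 2--4 of Theorem \ref{thmsigma} and of the proof of Theorem \ref{PSGAC}, but now carried out in the original time variable $t$ rather than in $s$, delivers the feedback and closes the cycle; the full estimates are the content of Section \ref{s4}.
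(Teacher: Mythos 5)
Your overall architecture coincides with the paper's: the same cycle (i)$\Rightarrow$(ii)$\Rightarrow$(iii)$\Rightarrow$(iv)$\Rightarrow$(v)$\Rightarrow$(i) with (iii)$\Rightarrow$(vi)$\Rightarrow$(ii) attached, and arrow by arrow the same ingredients --- Theorem \ref{ThEEr} plus Proposition \ref{p3} for (i)$\Rightarrow$(ii); the converse Lyapunov theorem for the compact-control, Lipschitz extended system for (ii)$\Leftrightarrow$(iii) (the paper cites \cite[Theorem 1]{KT00} and \cite{KT04}); Proposition \ref{p2} together with the rescaling inequality, which is exactly \eqref{orrs}, for (iii)$\Rightarrow$(iv); and Theorem \ref{PSGAC} for (v)$\Rightarrow$(i) and (vi)$\Rightarrow$(ii). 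Up to this point your proposal is correct and essentially identical to the paper.

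The genuine gap is at (iv)$\Rightarrow$(v), which you rightly single out as the decisive node but then only sketch. The paper closes this arrow in one line by invoking \cite[Theorem 4.6]{LM20}, a result established by the authors precisely for unbounded control systems and valid for an \emph{arbitrary} locally Lipschitz CLF of \eqref{E}. Your substitute plan has two unproven components. First, your fallback for a non-semiconcave CLF --- ``prove (iv)$\Rightarrow$(i) and re-enter the cycle'' --- begs the question: constructing trajectories of \eqref{E} that decrease $W$ from the decrease condition alone faces exactly the obstructions the detour is meant to avoid (near-minimizing controls possibly unbounded on compact sets, finite blow-up within a step, no Lipschitz bound uniform in $u$), so it is of the same order of difficulty as (iv)$\Rightarrow$(v) itself, not ``free''. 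Note in this respect that a CLF for \eqref{E} is \emph{not} automatically a CLF for \eqref{Er}: the inequality \eqref{orrs} only goes one way, since dividing by $1+\nu(|u|)$ can degrade the decrease rate arbitrarily; the paper needs \cite[Theorem 4.3]{LM20} for that passage (see Proposition \ref{psc}). Second, even granting semiconcavity, the one-step estimate you propose to carry out in the original time variable $t$ is precisely where the unboundedness bites: freezing $u=K(x(t_{k-1}))$ over a sampling interval, the factor $1+\nu(|K|)$ destroys the uniform estimates of the bounded-control case, and you would additionally need a locally bounded selection of \emph{finite} controls realizing the strict decrease, i.e.\ an analogue for $H_{f,U}$ of what Proposition \ref{pN} proves for the extended Hamiltonian via compactness of $\overline{\U}$ and upper semicontinuity of $x\rightsquigarrow\partial_LW(x)$ --- you assert such a selection exists but give no argument. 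The paper's route (see Proposition \ref{pK}) is structurally different here: the feedback is constructed for the \emph{extended} system, where the control set is compact and $F$ is Lipschitz so the classical machinery applies, and is then transported to \eqref{E} through the identity $\f(x,K(x))=F(x,\hat K(x))$ and the fact (\cite[Theorem 2.5]{LM20}) that the rescaled and original systems share the same stabilizing feedbacks; working directly in $t$, as you propose, forgoes this transfer, and since equispaced sampling does not commute with the time change $s(t)$, the missing estimates cannot simply be quoted from the bounded theory.
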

 
\begin{proof} Let us preliminarily observe that  the Lipschitz continuity hypothesis on $F$ in  {\bf (Hg)$^*$} implies both that $f$ satisfies hypothesis {\bf (Hl)}  and   that   $\f$ is locally Lipschitz continuous in $x$ (on $\overline{\R^n\setminus\C}$), uniformly w.r.t. $u\in U$.
\vskip 0.2 truecm
${\rm (i)}\Longrightarrow {\rm (ii)}$.  From Theorem \ref{ThEEr} it follows that the original control system  is GAC to $\C$ if and  only if the rescaled control system $y'=\f(y,v)$ is GAC to $\C$. This implies that  the extended control system   is GAC to $\C$,  in view of Proposition  \ref{p3}.
\vskip 0.2 truecm
  ${\rm (ii)}\Longleftrightarrow {\rm (iii)}$. The fact that the extended control system is GAC to $\C$ if and only if there exists a CLF for it follows from  \cite[Theorem 1]{KT00} (see also \cite[Thm 3.2, Rmk. 4]{KT04}). Observe that this result is applicable to the impulsive extension  essentially because the set $\overline{\U}$ of extended control values is bounded. 
\vskip 0.2 truecm
  ${\rm (iii)}\Longrightarrow {\rm (iv)}$. Let $W$ be a CLF for the  extended control system  $y'=F(y,w_0,w)$, for some decrease rate $\gamma$.  Hence, by Proposition \ref{p2}  $W$ is also a CLF for the rescaled system $y'=\f(y,v)$, with  the same $\gamma$. Since the definition of $\bar f$  implies that 
\begin{equation}\label{orrs} 
H_{f,U}(x,p)\leq H_{\bar f,U}(x,p)< -\gamma(W(x))\quad \forall x\in \R^n\setminus \C, \ \ \forall p\in \partial_L W(x), 
\end{equation}
we can finally conclude that  $W$ is  a CLF also for the original control system. 
\vskip 0.2 truecm
 ${\rm (iv)}\Longrightarrow {\rm (v)}$. Let  $W$ be a CLF for \eqref{E}. Then,  \cite[Theorem 4.6]{LM20} implies that $\dot x=f(x,u)$ is sample stabilizable  to $\C$.
\vskip 0.2 truecm
 ${\rm (v)}\Longrightarrow {\rm (i)}$. The sample stabilizability of $\dot x=f(x,u)$ to $\C$ implies that it is GAC to $\C$,  by  Theorem   \ref{PSGAC}.  
 \vskip 0.2 truecm  
With this,  we have shown that (i),(ii),(iii),(iv), and  (v) are equivalent. To conclude the proof it suffices to observe that, by the same arguments as above,  ${\rm (iii)}\Longrightarrow {\rm (vi)}$ and ${\rm (vi)}\Longrightarrow {\rm (ii)}$, namely,  the existence of a CLF   for $y'=F(y,w_0,w)$ implies sample stabilizability of the extended system, which in turn implies that $y'=F(y,w_0,w)$ is GAC to $\C$.  
 \end{proof}

\subsection{Semiconcave CLFs and stabilizing  feedback construction} 
Given an  open  set   $\Omega\subseteq\R^N$, a   function $W:\Omega \to\R$  is  called {\em  locally semiconcave} if for every compact  subset $\mathcal K\subset \Omega$  there exists $\rho>0$ such that, for all   $x$, $\hat x\in \mathcal K$ with $[x,\hat x]\subset\mathcal K$, one has
	$$
	W(x)+W(\hat x)-2W\left(\frac{x+\hat x}{2}\right)\le \rho|x-\hat x|^2.
	$$
	Locally semiconcave functions are locally Lipschitz continuous and   twice differentiable almost everywhere  (see e.g. \cite{CS}).

\vskip 0.2 truecm
 Since the works by Rifford \cite{R00,R02},  semiconcave  control Lyapunov functions  have proven to   play a crucial role for the explicit construction of sample stabilizing feedback strategies.  It is therefore worth  noting that
in Theorem \ref{thm3}  we can   assume without loss of generality  that  CLFs  are  locally semiconcave on $\R^n\setminus\C$. Precisely, one has:
\begin{proposition}\label{psc}    
Under the assumptions of Theorem  \ref{thm3},
\footnote{Actually, from the results in \cite{LM18,LM20} this statement is valid even if $f$, $\f$, and $F$ satisfy the assumptions in {\bf (Hg)$^*$} only for  $x\in \overline{\R^n\setminus\C}$.}  if there exists a CLF either for the extended control system  \eqref{Es} or for the original control system  \eqref{E}, then there exists a CLF for \eqref{Es} or  \eqref{E}, respectively, which is locally semiconcave on $\R^n\setminus\C$.
\end{proposition}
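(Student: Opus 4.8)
The plan is to reduce the semiconcavity statement for the extended and original systems to a single known regularization result, exploiting the equivalences already established in Theorem \ref{thm3} and Proposition \ref{p2}. First I would treat the extended system \eqref{Es}. Here the extended control set $\overline{\U}$ is \emph{compact}, and under {\bf (Hg)$^*$} the extended dynamics $F$ is continuous, locally bounded, and locally Lipschitz in $x$ uniformly with respect to $(w_0,w)\in\overline{\U}$. These are precisely the standing hypotheses under which the classical converse Lyapunov machinery applies. Thus, if a (merely locally Lipschitz) CLF exists for \eqref{Es}, the system is GAC to $\C$ by Theorem \ref{thm3}, and I would then invoke the semiconcave converse Lyapunov theorem of Rifford \cite{R00,R02} (in the set-target, `uniform in distance' form of \cite{KT00,KT04}) to produce a CLF for \eqref{Es} that is locally semiconcave on $\R^n\setminus\C$. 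The point is that the impulsive extension has been engineered precisely so that the bounded-control theory is directly applicable.

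Next I would transfer the result to the original system \eqref{E}. The clean route is Proposition \ref{p2}: since the rescaled dynamics $\f$ satisfies {\bf (Hl)} and, under {\bf (Hg)$^*$}, is locally Lipschitz in $x$ uniformly in $u\in U$, a locally semiconcave CLF $W$ for the extended system \eqref{Es} is simultaneously a locally semiconcave CLF for the rescaled system \eqref{Er}, because $H_{\f,U}\equiv H_{F,\overline{\U}}$ and semiconcavity is a property of $W$ alone, independent of the dynamics. It then remains to pass from $\f$ to $f$. By the inequality $H_{f,U}(x,p)\le H_{\f,U}(x,p)$ already recorded in \eqref{orrs}, the very same $W$, with the very same decrease rate $\gamma$, satisfies the decrease condition \eqref{CLF} for the original dynamics $f$; hence $W$ is a locally semiconcave CLF for \eqref{E}. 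So whichever system is assumed to admit a CLF, one regularizes on the extended/rescaled level and descends to the original system without altering $W$.

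I expect the main obstacle to be the careful verification that the cited regularization result genuinely applies in the present set-target, unbounded-original-control setting. Two technical points deserve attention. First, the theorems of \cite{R00,R02,KT00} are stated for stabilization to a point or to a set under a boundedness hypothesis on the dynamics near the target; one must check that, for the extended system, $F$ being locally bounded on $\overline{(\R^n\setminus\C)}\times\overline{\U}$ (hypothesis {\bf (Hg),(ii)}) together with the compactness of $\partial\C$ supplies exactly the uniform-in-distance regularity those results require. Second, one must ensure that the semiconcave CLF obtained for the extended system is semiconcave \emph{as a function on $\R^n\setminus\C$}, so that the descent to $\f$ and then to $f$ preserves the property; this is immediate here because semiconcavity is intrinsic to $W$ and the footnote already flags that the hypotheses need only hold on $\overline{\R^n\setminus\C}$. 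Once these compatibility checks are in place, the two implications (extended $\Rightarrow$ extended semiconcave, original $\Rightarrow$ original semiconcave) follow by the bridging identities of Proposition \ref{p2} and inequality \eqref{orrs}, with no further construction required.
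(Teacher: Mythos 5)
Your architecture is sound and your descent steps match the paper's, but your route to the semiconcave CLF is genuinely different. The paper never detours through GAC: for the extended system it applies a direct regularization result, \cite[Theorem 4.3]{LM18} (locally Lipschitz CLF $\Rightarrow$ locally semiconcave CLF, under the classical boundedness and Lipschitz hypotheses that $F$ satisfies), straight to the given CLF; for the original system it first upgrades the CLF for \eqref{E} to a locally Lipschitz CLF for the rescaled system \eqref{Er} via \cite[Theorem 4.3]{LM20}, regularizes that with \cite[Theorem 4.3]{LM18}, and descends to \eqref{E} by \eqref{orrs}. Your alternative --- use the equivalences of Theorem \ref{thm3} to pass from the given CLF to GAC of the extended system, produce a semiconcave CLF at the extended level, and descend via Proposition \ref{p2} and \eqref{orrs} --- is legitimate and non-circular (the proof of Theorem \ref{thm3} does not invoke Proposition \ref{psc}), and it even spares you the bridge \cite[Theorem 4.3]{LM20}, which is the genuinely nontrivial step in the paper's original-system case: \eqref{orrs} only gives $H_{f,U}\le H_{\f,U}$, so a CLF for \eqref{E} is \emph{not} automatically a CLF for \eqref{Er} (dividing by $1+\nu(|u|)$ degrades the decrease rate when $|u|$ is large), and your GAC detour cleanly sidesteps this. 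Your observations that semiconcavity is intrinsic to $W$, and that $H_{\f,U}\equiv H_{F,\overline{\U}}$ makes the transfer dynamics-free, are correct and are exactly how the paper descends.

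The one concrete soft spot is the citation you lean on for the regularization step. Rifford's semiconcave converse theorems \cite{R00,R02} are stated for point targets with bounded controls, and \cite{KT00,KT04} produce continuous or locally Lipschitz CLFs for set targets, not semiconcave ones; a ``set-target form of Rifford'' is not literally available in those references. In the present setting the implication GAC $\Rightarrow$ semiconcave CLF for the extended system must be assembled as \cite[Theorem 1]{KT00} (locally Lipschitz CLF) followed by the regularization \cite[Theorem 4.3]{LM18} --- which is precisely the tool the paper's proof uses in place of Rifford. You correctly flagged this verification as the main obstacle; filling it requires \cite{LM18}, after which your argument closes.
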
 
\begin{proof} If there exists a locally Lipschitz continuous  CLF  for the extended system \eqref{Es}, the dynamics of which meet classical   Lipschitz continuity and boundedness assumptions, then from   \cite[Theorem 4.3]{LM18}  there  is also a   locally semiconcave CLF  for \eqref{Es}. On the other hand, when there exists  a locally Lipschitz continuous CLF    for the original system \eqref{E},   \cite[Theorem 4.3]{LM20}   guarantees the existence of  a locally Lipschitz continuous CLF for the rescaled system \eqref{Er}. At this point,   \cite[Theorem 4.3]{LM18} again implies the existence of a   locally semiconcave CLF  for \eqref{Er}, which, as one  deduces by \eqref{orrs},   is also a  locally semiconcave CLF  for \eqref{E}. The proof of the claim is thus complete. 
\end{proof} 

Thanks to Proposition \ref{psc}, we can explicitly build a stabilizing feedback for the original control  system from a stabilizing feedback for the impulsive extension.  To this aim, we need the following preliminary result.  
  \begin{proposition}\label{pN}  Assume that $F:(\R^n\setminus\C)\times\overline{\U}\to\R$ is a continuous function.
Let $W$ be a CLF for   $y'=F(y,w_0,w)$, with decrease rate $\gamma$. Then, there exists  a continuous function $N:(0,+\infty)\to (0,1]$ such that
\begin{equation}\label{CLFN} H_{  F, \U_{N(W(x))}}(x,\partial_L W(x))<-\gamma(W(x)) \quad \forall x\in \R^n\setminus \C,
\end{equation}
where, for every $\rho\in(0,1]$, 
$$\U_\rho:=\{(w_0,w)\in \U: \  w_0 \ge\rho\}.$$  
\end{proposition}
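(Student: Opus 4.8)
The plan is to reduce \eqref{CLFN} to a question about a single scalar parameter $\rho$ and then produce $N$ by a partition-of-unity argument. For $(x,\rho)\in(\R^n\setminus\C)\times(0,1]$ set
$$
\Theta(x,\rho):=\max_{p\in\partial_L W(x)} H_{F,\U_\rho}(x,p)+\gamma(W(x)),
$$
so that \eqref{CLFN} is exactly the requirement $\Theta(x,N(W(x)))<0$ for all $x\in\R^n\setminus\C$. First I would record three structural properties of $\Theta$. Since $\rho'\le\rho''$ forces $\U_{\rho''}\subseteq\U_{\rho'}$ and hence $H_{F,\U_{\rho''}}\ge H_{F,\U_{\rho'}}$, the map $\rho\mapsto\Theta(x,\rho)$ is non-decreasing. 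Because each $\U_\rho$ is compact and $F$ is continuous, $(x,p)\mapsto H_{F,\U_\rho}(x,p)$ is jointly continuous; combined with the upper semicontinuity and compact-valuedness of $x\rightsquigarrow\partial_L W(x)$ recalled above, this makes $x\mapsto\Theta(x,\rho)$ upper semicontinuous for each fixed $\rho$. Finally, for fixed $x$ and $p$ one has $H_{F,\U_\rho}(x,p)\downarrow H_{F,\overline{\U}}(x,p)$ as $\rho\downarrow 0$, since $\U$ is dense in $\overline{\U}$ and $F$ is continuous.

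The second step is to show that for every $x$ there is some $\rho>0$ with $\Theta(x,\rho)<0$. I would argue by contradiction: if $\Theta(x,1/n)\ge 0$ for all $n$, pick $p_n\in\partial_L W(x)$ attaining the maximum, extract $p_n\to p^\ast\in\partial_L W(x)$ by compactness, and use monotonicity in $\rho$ together with continuity of $H_{F,\U_{\rho_0}}(x,\cdot)$ to deduce $H_{F,\U_{\rho_0}}(x,p^\ast)\ge -\gamma(W(x))$ for every fixed $\rho_0>0$; letting $\rho_0\to 0$ yields $H_{F,\overline{\U}}(x,p^\ast)\ge -\gamma(W(x))$, contradicting the decrease condition \eqref{CLF} for the CLF $W$. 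Upgrading this to local uniformity in the value $W(x)$ is then immediate from upper semicontinuity: fixing $c_0>0$ and using that $W^{-1}([c_0-1,c_0+1])$ is compact by properness of $W$, a convergent-subsequence argument shows there exist $\rho_0>0$ and $\delta_0>0$ with $\Theta(x,\rho_0)<0$ whenever $W(x)\in(c_0-\delta_0,c_0+\delta_0)$.

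Finally I would assemble $N$. The intervals $(c_0-\delta_0,c_0+\delta_0)$ cover $(0,+\infty)$; choosing a locally finite countable subcover $\{I_j\}$ with associated thresholds $\rho_j\in(0,1]$ and a continuous partition of unity $\{\phi_j\}$ subordinate to it, I set $N(c):=\sum_j\phi_j(c)\,\rho_j$. This $N$ is continuous and $(0,1]$-valued. For any $x$ with $W(x)=c$, every index $j$ active at $c$ (that is, $\phi_j(c)>0$) satisfies $\Theta(x,\rho_j)<0$, and $N(c)$ is a convex combination of these $\rho_j$, so $N(c)\le\rho_{\max}:=\max\{\rho_j:\phi_j(c)>0\}$; since $\rho_{\max}$ is itself an active threshold, monotonicity gives $\Theta(x,N(c))\le\Theta(x,\rho_{\max})<0$, which is precisely \eqref{CLFN}. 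The main obstacle I anticipate is exactly this last reconciliation: a pointwise-valid threshold $\rho$ is readily found locally but need not depend continuously on $W(x)$, and the crux is that averaging the $\rho_j$ through the partition of unity keeps $N(c)$ below the largest active (still valid) threshold, so that the monotonicity of $\Theta$ in $\rho$ transfers the strict inequality to the continuous choice.
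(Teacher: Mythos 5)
Your proof is correct, and its engine is the same as the paper's: the heart of both arguments is a compactness-plus-limit-point contradiction showing that the admissible $w_0$-threshold cannot degenerate to zero along level sets of $W$. The paper fixes $r>0$, forms the compact set $\Gamma_r=\{(x,p):\ x\in W^{-1}([r\land 1, r\vee 1]),\ p\in\partial_LW(x)\}$, defines $w_0(x,p)$ as the supremal admissible $w_0$ and $\hat N(r):=\inf_{\Gamma_r}w_0(x,p)$, and proves $\hat N(r)>0$ exactly as in your pointwise/local-uniformity steps: a sequence $(x_k,p_k)$ with thresholds below $1/k$ converges in $\Gamma_r$ to some $(\bar x,\bar p)$, where the decrease condition \eqref{CLF} supplies a single pair $(\bar w_0,\bar w)\in\U$ with $\bar w_0>0$ and $\langle \bar p, F(\bar x,\bar w_0,\bar w)\rangle<-\gamma(W(\bar x))$, and continuity transfers this to large $k$ — the same contradiction you reach, except that the paper uses one valid pair directly where you pass through the monotone limit $H_{F,\U_{\rho_0}}\downarrow H_{F,\overline{\U}}$ as $\rho_0\downarrow 0$. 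Where you genuinely diverge is the continuization of the threshold: the paper anchors its bands at $r=1$, so $\hat N$ is by construction increasing on $(0,1]$ and decreasing on $[1,+\infty)$, hence bounded below by positive constants on compact subintervals, and any continuous positive minorant with the same monotonicity yields \eqref{CLFN} by taking $r=W(x)$; you instead cover $(0,+\infty)$ by intervals carrying locally valid thresholds and glue with a partition of unity, and your closing observation — that $N(c)$ is a convex combination of active thresholds, hence lies below the largest active one, so monotonicity of $\Theta$ in $\rho$ preserves the strict inequality — is exactly what makes the glued $N$ admissible and is the one step with no counterpart in the paper. Your route is somewhat more portable (it needs no monotone nesting of level bands, only local uniformity plus the order structure of $\Theta$ in $\rho$), while the paper's construction delivers for free a monotone $N$, for which the existence of a continuous positive minorant is immediate. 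Two cosmetic repairs to yours: for $c_0\le 1$ the set $[c_0-1,c_0+1]$ should be replaced by $[(c_0-1)\vee 0,\,c_0+1]$ before invoking properness, and one should note that the limit point of your subsequence satisfies $W(\bar x)=c_0>0$, hence $\bar x\in\R^n\setminus\C$, so that $\Theta$ is defined there; neither affects the argument.
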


\begin{proof}
To prove the statement, we show that  there is some continuous function $N:(0,+\infty)\to (0,1]$, which is  increasing in $(0,1]$,  decreasing in $[1,+\infty)$,  and such that, for any $r>0$,  one has
\begin{equation}\label{sfj} H_{  F, \U_{N(r)}}(x,\partial_L W(x))<-\gamma(W(x)) \quad \forall x\in W^{-1}([ r\land 1, r\vee 1]).\end{equation}
In fact, proven this,  from the monotonicity properties  of $N$  it immediately follows that  in \eqref{sfj} we can replace  $r$ with $W(x)$, and that implies \eqref{CLFN} by the arbitrariness of $r>0$. 

To prove \eqref{sfj}, fix $r>0$ and set
$$
\Gamma_r:=\{(x,p): \ \ x\in W^{-1}([ r\land 1, r\vee 1]), \ p\in\partial_LW(x)\}.
$$
Notice that the  properties of $W$ --in particular, the properness of $W$ and  the upper semicontinuity of the set-valued map $x\rightsquigarrow \partial_LW(x)$-- imply that $\Gamma_r$ is a compact set.
For every $(x,p) \in \Gamma_r$,  define
$$w_0(x,p):=\sup\{w_0: \  (w_0,w)\in \U \ \text{and} \  \langle p, F( x,w_0,w)\rangle<-\gamma(W( x))\}. $$
 This set is nonempty because $W$ is a CLF, and  $w_0(x,p)\in[0,1]$. At this point, set
$$\hat N(r):=\inf\{w_0(x,p): \ (x,p)\in\Gamma_r\}.$$
By construction,  $\hat N$ is nonnegative, increasing in $(0,1]$ and decreasing in $[1,+\infty)$,  and $\hat N([0,+\infty))\subseteq[0,1]$. If $\hat N(r)>0$ for all $r>0$, then the required $N$ is given by any continuous, positive approximation from below of $\hat N$, which is increasing in $(0,1]$ and  decreasing in $[1,+\infty)$.
\noindent To conclude,  it only remains to prove that $\hat N(r)>0$ for all $r>0$. To this end, assume by contradiction that $\hat N(r)=0$ for some $r>0$. Then, there is some sequence $((x_k,p_k))_{k\ge 1}\subset \Gamma_r$, such that $w_0(x_k,p_k)<1/k$ for all $k$.  Hence,  the definition of $w_0(x_k,p_k)$ yields  
\begin{equation}\label{contreq0}  \langle p_k,F(x_k,w_0,w)\rangle<-\gamma(W(x_k)) \ \text{ for } \ (w_0,w)\in \overline{\U} \Longrightarrow w_0<\frac{1}{k}.\end{equation}
Since $\Gamma_r$ is compact,  there exists a subsequence, that we still denote $((x_k,p_k))_k$, converging to some $(\bar x,\bar p)\in\Gamma_r$. Since $W$ is a CLF with decrease rate $\gamma$,   there exists some $(\bar w_0,\bar w)\in \U$ (with $\bar w_0>0$) such that
$$
 \langle \bar p, F(\bar x,\bar w_0,\bar w) \rangle<-\gamma(W(\bar x)).$$
 Therefore, by the continuity of $F,W$ and $\gamma$,  for a sufficiently large $k$  one has $1/k<\bar \omega_0$ and 
$$
 \langle p_k, F( x_k,\bar w_0,\bar w)\rangle<-\gamma(W(x_k)),$$
 in contradiction with \eqref{contreq0}, so that  the proof is complete. 
\end{proof}

From Proposition \ref{pN} it follows that, given a semiconcave control Lyapunov function for the  extended control system, we can always select a stabilizing feedback $\hat K(x)=(\hat w_0(x),\hat w(x))$  {\em which is not impulsive,} namely such that $\hat w_0(x)>0$ for every $x\in\R^n\setminus\C$.
\begin{proposition}\label{pK} Consider the same assumptions as in  Theorem  \ref{thm3}. Let  the extended system \eqref{Es} be sample stabilizable to $\C$.  
Then, there exist a continuous function $N:(0,+\infty)\to(0,1)$ and a stabilizing feedback  $\hat K: \R^n\setminus \C\to \U$, $\hat K(x)=(\hat w_0(x),\hat w(x))$ for  \eqref{Es},  satisfying
\begin{equation}\label{w0N}
w_0(x)\geq N(W(x))\qquad  \forall x\in \R^n\setminus \C,
\end{equation}
  so that
 the locally bounded feedback $K: \R^n\setminus \C\to U$
given by
\begin{equation}\label{KN}
K(x):=\frac{\hat w(x)}{|\hat w(x)|}\, \nu^{-1}\left({\frac{|\hat w(x)|}{\hat w_0(x)}}\right) \qquad  \forall x\in \R^n\setminus \C
\end{equation}
is sample stabilizing for the original system \eqref{E} to $\C$.
\end{proposition}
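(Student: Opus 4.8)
The plan is to produce a locally semiconcave control Lyapunov function $W$ for the extended system, to exploit Proposition \ref{pN} so that the stabilizing extended controls stay uniformly away from the impulsive boundary $\{w_0=0\}$, and finally to transport the resulting non-impulsive feedback to the original system through the correspondence \eqref{eqresex}, checking that the induced sampling trajectories inherit the decrease estimate \eqref{betaS}.

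First I would use sample stabilizability of \eqref{Es}, together with the equivalences of Theorem \ref{thm3}, to obtain a CLF for \eqref{Es}, and then Proposition \ref{psc} to take it locally semiconcave on $\R^n\setminus\C$; call it $W$, with decrease rate $\gamma$. Proposition \ref{pN} then yields a continuous $N:(0,+\infty)\to(0,1]$ --- which, after a slight shrinking, I take valued in $(0,1)$ --- such that the decrease condition \eqref{CLFN} holds with the \emph{compact} control set $\U_{N(W(x))}$ in place of $\overline{\U}$. Because the admissible controls are now confined to $\U_{N(W(x))}$, hence bounded away from $\{w_0=0\}$, the standard semiconcave feedback construction (a Rifford-type pointwise selection of a near-minimizer of $(w_0,w)\mapsto\langle p,F(x,w_0,w)\rangle$, as in \cite{R00,R02,LM18}) provides a locally bounded feedback $\hat K(x)=(\hat w_0(x),\hat w(x))\in\U_{N(W(x))}$ that sample stabilizes \eqref{Es} to $\C$, with $\hat w_0(x)\ge N(W(x))$; this is \eqref{w0N}.

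Next I would verify that $K$ in \eqref{KN} is well posed and transports the infinitesimal decrease. Since $W$ is proper and positive definite on $\R^n\setminus\C$, it is bounded and bounded away from $0$ on each compact subset, so $N(W(\cdot))$ is bounded below there; hence $\hat w_0\ge N(W)>0$ locally, which makes $K=u(\hat w_0,\hat w)$ finite and locally bounded. By \eqref{eqresex} one has $\f(x,K(x))=F(x,\hat w_0(x),\hat w(x))$, and therefore, for every $p\in\partial_LW(x)$,
\[
\langle p,f(x,K(x))\rangle\le\langle p,\f(x,K(x))\rangle=\langle p,F(x,\hat K(x))\rangle<-\gamma(W(x)),
\]
the first inequality being \eqref{orrs}. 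Thus $W$ is a locally semiconcave CLF for \eqref{E} whose infinitesimal decrease is realized pointwise by the locally bounded feedback $K$.

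It then remains to show that this specific $K$ sample stabilizes \eqref{E}, which is where I expect the main obstacle. The guiding picture is that, on each sampling interval, the constant value $K(x(t_{k-1}))=u(\hat K(x(t_{k-1})))$ turns $x$, after the time change $s(t)=\int_0^t(1+\nu(|K(x(t_{k-1}))|))\,d\tau$ of Lemma \ref{cres}, into a sampling trajectory of \eqref{Es} driven by $\hat K$ with the same control values; since $\d(x(t))=\d(y(s(t)))$ and $s(t)\ge t$, the extended estimate \eqref{betaS} would descend to \eqref{E}. The difficulty is that the induced $s$-partition has diameter $(1+\nu(|K(x(t_{k-1}))|))(t_k-t_{k-1})$, controlled only while $|K|=\nu^{-1}(|\hat w|/\hat w_0)$ stays bounded: the lower bound $\hat w_0\ge N(W)$ keeps this factor finite on every compact annulus $\{r\le\d\le R\}$, so that a partition fine enough in $t$ induces one fine enough in $s$ there, while the local semiconcavity of $W$ supplies the quadratic sample-and-hold error estimate (in the spirit of \cite[Theorem 4.6]{LM20} and \cite{R00,R02}) needed to close the decrease uniformly and to absorb the chattering as the trajectory approaches the target.
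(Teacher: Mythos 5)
Your first half follows the paper exactly: semiconcave CLF $W$ via Theorem \ref{thm3} and Proposition \ref{psc}, then Proposition \ref{pN} to select $\hat K(x)\in\U_{N(W(x))}$ as a pointwise minimizer of $(w_0,w)\mapsto\langle p(x),F(x,w_0,w)\rangle$, giving \eqref{w0N}, sample stabilizability of \eqref{Es} by the results of \cite[Sect.~3]{LM20}, and the local bound $|K(x)|\le \nu^{-1}\left(1/N(W(x))\right)$. The gap is in your last step, and it is genuine: you try to turn a $\pi$-sampling trajectory of \eqref{E} under $K$ directly into a sampling trajectory of \eqref{Es} under $\hat K$ via the per-interval time change, and the induced $s$-partition has diameter $(1+\nu(|K(x(t_{k-1}))|))(t_k-t_{k-1})$. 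Your compact-annulus remedy controls this only on $\{r\le\d\le R\}$, but the sample stabilizability estimate \eqref{betaS} must hold along the \emph{whole} trajectory, which enters the region $\d<r$ where $N(W)\to 0$ and $|K|$ may blow up; there a uniformly fine $t$-partition induces an uncontrollably coarse $s$-partition, precisely where you need the extended estimate, and the appeal to semiconcavity to ``absorb the chattering'' is a gesture, not an argument. Your side remark that $W$ is a CLF for \eqref{E} realized pointwise by $K$ (via \eqref{orrs}) is correct but does not rescue the claim: by \cite[Theorem 4.6]{LM20} it yields sample stabilizability of \eqref{E} by \emph{some} feedback, not by the specific $K$ of \eqref{KN}.

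The paper closes this step differently, and more cleanly, in two moves that avoid any reparametrization of sampled trajectories. First, since $\bar f(x,K(x))=F(x,\hat K(x))$ for all $x\in\R^n\setminus\C$, the sampling trajectories of the extended system \eqref{Es} under $\hat K$ \emph{coincide identically} with the sampling trajectories of the rescaled system \eqref{Er} under $K$ — both are ODEs in the same parameter $s$, run with the same partition and the same frozen vector fields — so $K$ sample stabilizes \eqref{Er} with no change of partition at all. Second, the passage from the rescaled to the original system, which is where the unbounded time-change factor $1+\nu(|K|)$ near $\C$ actually lives, is delegated to \cite[Theorem 2.5]{LM20}, which asserts that the rescaled and original systems share the same sample stabilizing feedbacks; that cited theorem is exactly the tool engineered to handle feedbacks unbounded near the target, and it is what your sketch is missing. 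If you restructure your ending as (original sampling under $K$) $\leftrightarrow$ (rescaled sampling under $K$) via \cite[Theorem 2.5]{LM20}, plus the exact coincidence (rescaled under $K$) $=$ (extended under $\hat K$), your argument becomes the paper's proof.
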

\begin{proof}
From  Theorem \ref{thm3} and Proposition \ref{psc} it follows that,  if the extended control system $y'=F(y,w_0,w)$ is sample stabilizable to $\C$, then it admits a locally semiconcave CLF $W$. Hence, Proposition \ref{pN} with reference to $W$ implies the existence of a continuous function  $N:(0,+\infty)\to (0,1]$  such that, fixed  a selection $p(x)\in \partial_LW(x)$ for any  $x\in \R^n\setminus\C$,  any map  $\hat K:\R^n\setminus\C\to \U$ such that
 $$
 \hat K(x)=(\hat w_0(x),\hat w(x))\in \underset{(w_0,w)\in \U_{N(W(x))}}{\arg\min}\Big\{\langle p(x),F(x,w_0,w)\rangle  \Big\},
 $$ 
 satisfies \eqref{w0N}  and the inequality
$$
\langle p(x),F(x,w_0,w)\rangle <-\gamma(W(x)) \qquad\forall x\in\R^n\setminus\C. 
$$
As shown in  \cite[Sect. 3]{LM20}, this implies that $\hat K$  is a  sample stabilizing feedback for  the extended system \eqref{Es}. 
Consider now the feedback $K: \R^n\setminus \C\to U$ associated to such $\hat K$, defined  as in \eqref{KN}. It is locally bounded, since   
$$
|K(x)|\le \nu^{-1}\left({\frac{1}{N(W(x))}}\right) \qquad \forall x\in\R^n\setminus\C.
$$
Furthermore, $K$ is  sample stabilizing for the rescaled control system $y'=\bar f(y,u)$, in view of  the identity $\bar f(x,K(x))=F(x,\hat K(x))$ for all $x\in \R^n\setminus \C$, from which it follows that  sampling trajectories associated to $\hat K$ for the extended system coincide with the sampling trajectories associated to $K$ for the rescaled system. Since by  \cite[Theorem 2.5]{LM20}  the rescaled and the original system share the same stabilizing feedbacks,   $K$ is  sample stabilizing also for  $\dot x=f(x,u)$ and   the proof is concluded.
\end{proof}

\begin{remark} {\rm Observe that the converse relation, namely the fact that, given a stabilizing feedback   $K$ for the original system \eqref{E}, it is possible to derive a feedback $\hat K$ for the impulsive extension, is quite obvious. Indeed, it is easy to see that  the map 
$$\hat K: x\mapsto\left(\frac{1}{1+\nu(|K(x)|)}\, , \,  \frac{K(x)\,\nu(|K(x)|)}{|K(x)|(1+\nu(|K(x)|))} \right) \quad\forall x\in\R^n\setminus\C$$
is a stabilizing feedback for the extended system \eqref{Es}, again by the identity $F(x,\hat K(x))=\bar f(x, K(x))$ together with  \cite[Theorem 2.5]{LM20}.}
\end{remark}

\section{An example}\label{Sex} 
 In this  section, we introduce a simple control system which is neither globally asymptotically controllable nor sample stabilizable to the origin by means of bounded controls,  whereas it is by means of unbounded   strategies.  Furthermore, we show how to construct a stabilizing feedback for the original system, given a control Lyapunov function (and an associated  stabilizing feedback)  of the extend system. 
 
\vskip 0.2 truecm
Consider the  target  $\T:=\{0\}$ and the    one-dimensional control system
\begin{equation}\label{ex1}
\dot x(t)=x(t)-x^3(t)u(t), \quad u(t)\in U:=[0,+\infty) \quad\text{a.e}.
\end{equation}
Using only  controls taking  values in a  bounded subset $[0,M]$ of $U$ for some $M>0$, the best strategy to approach the origin is clearly to implement the constant control $u\equiv M$ and  solve the differential equation
$$\dot x(t)=x(t)-Mx^3(t).$$
But this way, for every initial point $z\ne0$ we get the trajectory 
\begin{equation}
\label{exsol}
x(t)=\frac{ze^t}{\sqrt{z^2M(e^{2t}-1)+1}},
\end{equation}   
so we have   $\displaystyle\lim_{t\to+\infty}x(t)=\frac{\text{sign}(z)}{\sqrt{M}}\ne 0$. Therefore,  the control system \eqref{ex1} with   controls in any given bounded subset of  $U$  is not GAC to $\{0\}$. 

In view of Theorem \ref{thm3},  the global asymptotic controllability and the sample stabilizability of system \eqref{ex1} to $\{0\}$  when admissible pairs $(x,u)$ with controls $u\in L^\infty_{loc}([0,T_x),[0,+\infty))$ are allowed, is equivalent to the global asymptotic controllability to $\{0\}$  of  the impulsive extension 
\begin{equation}\label{ex1es}
y'(s)=F(y(s),w_0(s),w(s))=y(s)\,w_0(s)-y^3(s)w(s), \quad (w_0,w)(s)\in \overline{\U}  \quad\text{a.e.},
\end{equation}
where $\U:=\{(w_0,w)\in(0,+\infty)\times[0,+\infty), \ \ w_0+w=1\}$. Here, by choosing the  constant control $(w_0,w)(s)\equiv(0,1)$ for every $s\ge 0$,   for each starting point  $z\ne0$  we get an extended trajectory (describing  in the original time variable  an instantaneous jump  from $z$ to the target) that satisfies
$$\d(y(s))=|y(s)|=\frac{1}{\sqrt{2s+\frac{1}{z^2}}}=:\beta(|z|,s)\qquad \forall s\geq0,
$$ 
where $\beta\in{\mathcal {KL}}$. So, the original system is GAC and sample stabilizable to the origin,  because  the extended system is GAC to $\{0\}$.   

Again Theorem \ref{thm3} together with Propositions \ref{psc}, \ref{pK}  guarantees that there is a locally semiconcave control Lyapunov function for 
the  extended  system, which makes it possible to build both a sample stabilizing feedback $\hat K$ for \eqref{ex1es} and    a locally bounded sample stabilizing feedback $K$ for \eqref{ex1}. In particular,  a locally semiconcave CLF for {\eqref{ex1es}} is given by the function $W(x):=|x|$ for all $x\in\R$.  Indeed,  for every $x\ne0$,  one has
$$H_{F,\U}(x,\partial_LW(x))=\inf_{(w_0,w)\in\U} \left\{\frac{x}{|x|}\,(xw_0-x^3w)\right\}
=-|x|^3<-\gamma(W(x)),$$
if we choose  the decrease rate   $\gamma(r):=\frac{r^3}{2(2+r^2)}$, $r>0$.  At this point, setting $N(r):= \frac{r^2}{2+r^2}$, we can define the feedback $\hat K:\R\setminus\{0\}\to\U$, given by 
$$
\hat K(x)=(\hat w_0(x),\hat w(x)):= (N(|x|), 1-N(|x|))=\left(\frac{x^2}{2+x^2}, \frac{2}{2+x^2}\right), 
$$ 
which is sample stabilizing for the extend system, since   
$$
\frac{x}{|x|}\,(x\hat w_0(x)-x^3\hat w(x))= \frac{x}{|x|}\left(x\frac{x^2}{2+x^2}-x^3 \frac{2}{2+x^2}\right)=-\frac{|x|^3}{2+x^2}<-\gamma(W(x)),
$$
 for any $x\ne0$. At this point, from Proposition \ref{pK} it follows that  the locally bounded  feedback $K:\R\setminus\{0\} \to U$, defined by
$$
K(x)=\frac{\hat w(x)}{\hat w_0(x)}= \frac{2}{x^2} \qquad\forall x\ne0,
$$
is sample stabilizing for the original  control system \eqref{ex1}. 
In particular, an  associated  descent rate  is  $\beta(R,t):=R e^{-t/2}$ for all $(R,t)\in[0,+\infty)^2$. Indeed, let $(R,r)$ be a pair  with $0<r<R$, and choose any  sampling time  $\delta(R,r)>0$, which is  continuous, $r$-increasing and $R$-decreasing,   such that $\delta(R,r)\leq  \ln(\varphi)$, where $\varphi:=(1+\sqrt{5})/2$ is the Golden Mean. Then, for any partition  $\pi=(t_i)_i$    of $[0,+\infty)$ with sampling time $\delta(R,r)$,  the  $\pi$-sampling trajectory from each $z\not=0$ with $\d(z)\le R$   associated to $K$,  satisfies the recursive relation
$$
\dot x(t)=x(t)-\frac{1}{(x(t_n))^2} x^3(t) \quad t\in [t_n, t_{n+1}],\quad x(0)=z.$$
In view of the definition of $\delta(R,r)$, one has $e^{t-t_n}\in[1,\varphi]$ for all $t\in[t_n,t_{n+1}]$ and, in particular, this implies $e^{3(t-t_{n+1})}-2e^{2(t-t_n)}+1\leq 0$ for all $t\in[t_n,t_{n+1}]$. } Using \eqref{exsol} (with $M=2(x(t_n))^{-2}$)
we then obtain, after few computations, the estimate
$$|x(t)|=|x(t_n)|\frac{e^{t-t_n}}{\sqrt{2e^{2(t-t_n)}-1}}\leq |x(t_n)| e^{-(t-t_n)/2} \quad \forall t\in[t_n,t_{n+1}].$$
In particular $|x(t_{n+1})|\leq |x(t_n)|e^{-(t_{n+1}-t_n)/2}$, for all $n\geq 0$, therefore
$$|x(t)|\leq |z| e^{-t/2}=\beta(|z|,t)\qquad \forall t\geq 0.$$

\bibliographystyle{alpha}
\bibliography{lyap}




%

\end{document}